\newcounter{bibcount}
\patchcmd{\@lbibitem}{\item[}{\item[\hfil\stepcounter{bibcount}{\thebibcount.}}{}{}
\renewcommand\NAT@bibsetup%
\newtheorem{theorem}{Theorem}
\newtheorem{definition}{Definition}
\newtheorem{lemma}{Lemma}[section]
\newtheorem{proposition}{Proposition}
\newtheorem{remark}{Remark}[section]
\numberwithin{equation}{section}
\newcommand\bes{\begin{eqnarray}}
\newcommand\ees{\end{eqnarray}}
\newcommand\bess{\begin{eqnarray*}}
\newcommand\eess{\end{eqnarray*}}
\newcommand\vep{{\varepsilon}}
\newcommand{\ve}{\varepsilon}
\newcommand{\bolddot}{{\boldsymbol{\cdot}}}
\newcommand\cL{{\mathcal L}}
\newcommand\BbbR{{\mathbb R}}
\newcommand\cG{{\mathcal G}}
\newcommand{\cM}{{\mathcal M}}
\newcommand\bX{{\mathbf X}}
\newcommand\bY{{\mathbf Y}}
\newcommand\fX{{\frak X}}
\newcommand\fB{{\frak B}}
\newcommand\fY{{\frak Y}}
\newcommand\BbbP{{\mathbb P}}
\newcommand\cF{{\mathcal F}}
\newcommand\cP{{\mathcal P}}
\begin{document}

\title{Higher-Order Regularity
of the Free Boundary  in \\ the Inverse First-Passage  Problem}
\author{Xinfu Chen\thanks{Department of Mathematics, University of Pittsburgh, xinfu@pitt.edu} \and 
John Chadam\thanks{Department of Mathematics, University of Pittsburgh, chadam@pitt.edu} \and 
David Saunders\thanks{Department of Statistics and Actuarial Science, University of Waterloo, dsaunders@uwaterloo.ca}}
\maketitle

\begin{abstract}{\small
Consider the  inverse first-passage  problem:   Given
a diffusion process $\{\fX_{t}\}_{t\geqslant 0}$ on a probability space $(\Omega,\cF,\BbbP)$  and a survival probability function $p$ on $[0,\infty)$, find a boundary, $x=b(t)$,  such that
$p$ is
 the survival probability that $\fX$ does not fall below $b$, i.e., for each $t\geqslant 0$,
$p(t)= \BbbP( \{\omega\in\Omega\;|\; \fX_s(\omega) \geqslant b(s),\ \forall\, s\in(0,t)\})$. In earlier work, we analyzed 
viscosity solutions of a related variational inequality, and showed that they provided the only upper semi-continuous (usc)
solutions of the inverse problem. We furthermore proved weak regularity (continuity) of the boundary $b$ under additional 
assumptions on $p$. The purpose of this paper is to study higher-order regularity properties of the solution of the 
inverse first-passage problem. 
In particular, we show that when $p$ is smooth and has negative slope, the viscosity solution, and therefore also the unique usc solution  of the inverse problem, is smooth. Consequently,  the viscosity solution furnishes a unique classical solution to the free boundary problem  associated with the inverse first-passage  problem.}
\end{abstract}
\date{\today}
\maketitle

\section{Introduction}

\subsection{The First-Passage Problem and its Inverse}
Let  $\fX=\{\fX_t\}_{t\geqslant 0}$ be the solution to the stochastic differential equation
\bess d\fX_t =\mu(\fX_t,t) \,dt +\sigma(\fX_t,t)\, d\fB_t\quad\forall\, t>0,\eess
where $\{\fB_t\}_{t\geqslant 0}$ is a standard Brownian motion defined on a  probability space $(\Omega,\BbbP)$  and $\mu,\sigma$ are smooth bounded functions with $\inf_{\BbbR\times[0,\infty)}\sigma>0$.
The {\it boundary crossing}, or {\it first-passage} problem for the process $\fX$ concerns the following:
\medskip

\begin{enumerate}
\item[{\bf 1.}] {\bf The  Forward Problem: } Given a  function $b: (0,\infty)\to[-\infty,\infty)$, compute the survival probability, $\cP[b]$,  that $\fX$ does not fall below  $b$, i.e. evaluate
    \bes\label{1p} \cP[b](t) := \BbbP( \{\omega\in\Omega\;|\; \fX_s(\omega) \geqslant b(s)\ \forall\, s\in(0,t)\})\quad\forall\, t\geqslant 0.\ees

    \item[{\bf 2.}]{\bf The Inverse  Problem: } Given a survival probability
    $p$, find a  barrier $b$ such that $\cP[b]=p$.
            \end{enumerate}

\medskip
The forward  problem is classical and the subject of a large literature. According to  \citet{ZuccaSacerdote}, the inverse problem was first suggested by A.N. Shiryaev during a Banach Centre meeting, for the case where 
$\fX$ is a Brownian motion, and the first-passage distribution is exponential, i.e. $p(t) = e^{-\lambda t}$ for some $\lambda > 0$. 
\citet{DudleyGutmann} proved the existence of a stopping time for $\fX$ with a given law. \citet{Anulova} demonstrated the existence of a 
stopping time of the form $\tau = \inf\{ t\;|\; (\fX_{t},t)\in B\}$ for a closed set $B \subset \{ (x,t) \;|\;  x\in [-\infty,\infty], t \in [0,\infty]\}$ 
with the properties that if $(x,t)\in B$ then $(-x,t) \in B$, and if  
$x\geq 0$ and $(x,t) \in B$, then $[x,\infty] \times \{t\} \subseteq B$. Defining 
$b(t) = \inf\{ x\geqslant 0: (x,t)\in B\}: [0,\infty]\to[0,\infty]$, then $b$ satisfies the two-sided version of the inverse problem, 
$p(t) = \BbbP( \{\omega\in\Omega\;|\; -b(s) \leqslant \fX_{s}(\omega) \leqslant b(s)\ \forall\, s\in(0,t)\}),\, \forall\, t\geqslant 0$.

\medskip
In the 2000's, the inverse problem became the subject of renewed interest due to applications in financial mathematics. 
In particular, \citet{AvellanedaZhu} formulated the one-sided inverse problem given above as a free boundary 
problem for the Kolmogorov forward equation associated with~$\fX$, and discussed its numerical solution. Other numerical approaches and applications to credit risk were studied 
by \citet{HullWhiteTwo}, \citet{IscoeKreinin}, and \citet{HuangTian}. 
In~\citet{CCCS1}, we presented a rigorous mathematical analysis of the free boundary problem (formulated as a variational inequality), first demonstrating the existence of a unique viscosity solution 
to the problem, and analyzing integral equations satisfied by the boundary $b$, and its asymptotic behaviour for small $t$. 
In~\citet{CCCS2}, we proved that the boundary $b$ arising from the variational inequality does indeed solve the 
probabilistic formulation of the inverse problem. We also studied weak regularity properties of the free boundary 
$b$. In particular, we proved the following (see \citet{CCCS2}, Proposition 6): 
\begin{proposition}\label{WeakRegularityProposition} 
Suppose that $\fX$ is standard Brownian motion, started at 0 (i.e. $\mu\equiv 0$, $\sigma\equiv 1$, and $\fX_{0} = 0$), 
and that $p$ is continuous with $p(0)=1$. Define: 
\begin{equation} 
L(p,T_{1},T_{2}) := \inf_{T_{1} \leqslant s < t \leqslant T_{2}} \frac{p(s)-p(t)}{t-s}, \quad \forall 0\leqslant T_{1} < T_{2}.
\nonumber
\end{equation} 
\begin{enumerate}
\item If $L(p,T_{1},T_{2}) > 0$ for some positive $T_{1}, T_{2}$ with $T_{1} < T_{2}$, then $b$ is continuous on $(T_{1},T_{2})$. 
\item Assume that $L(p,0,T) > 0$ for every $T > 0$. Then $b\in C([0,\infty))$.
\end{enumerate}
\end{proposition}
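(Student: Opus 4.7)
The strategy is to exploit the strict negative-slope hypothesis on $p$ to derive a uniform non-degeneracy of the survival density $u$ at the free boundary, and then use this non-degeneracy together with parabolic regularity to exclude jumps in $b$. I would begin by recalling from \citet{CCCS1} that $b$ is the free boundary of the unique viscosity solution $u$ of a parabolic variational inequality, where $u(\cdot,t)$ represents the defective density of $\fX_t$ on the survival event and satisfies $u\equiv 0$ for $x\leqslant b(t)$ together with the forward Kolmogorov equation in the open region $\{u>0\}$. Since $\{u=0\}$ is closed, one-sided semicontinuity of $b$ is automatic; the burden of the proof is to exclude genuine jumps.

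The key reduction is the Stefan-type flux identity, obtained by differentiating $p(t)=\int_{b(t)}^\infty u(x,t)\,dx$ and using $u(b(t),t)=0$, which in the Brownian case $\sigma\equiv 1$ takes the integrated form
\[
p(s)-p(t) \;=\; \tfrac12\int_s^t u_x(b(\tau),\tau)\,d\tau \qquad \text{for } T_1\leqslant s<t\leqslant T_2.
\]
The hypothesis $L(p,T_1,T_2)>0$ therefore yields the quantitative boundary non-degeneracy $u_x(b(\tau),\tau)\geqslant 2L(p,T_1,T_2)$ in an averaged sense on $(T_1,T_2)$. Combined with the interior parabolic regularity of $u$ in $\{u>0\}$ and a standard Hopf-lemma comparison with a heat-kernel barrier, this upgrades to a pointwise linear lower bound of the form $u(x,t)\geqslant c\,(x-b(t))_+$ on compact subintervals of $(T_1,T_2)$ with $c>0$.

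With this non-degeneracy in hand, I would rule out both directions of jumps by contradiction. For an upward jump $b(t_0^-)<b(t_0^+)$ at some $t_0\in(T_1,T_2)$, picking $x_0\in(b(t_0^-),b(t_0^+))$, continuity of $u$ gives $u(x_0,t)\to u(x_0,t_0)=0$ as $t\uparrow t_0$, whereas the lower bound forces $u(x_0,t)\geqslant c(x_0-b(t))\geqslant c\,(x_0-b(t_0^-))/2>0$ for $t<t_0$ close enough --- a contradiction. For a downward jump $b(t_0^-)>b(t_0^+)$, continuity of $u$ forces $u(\cdot,t_0)\equiv 0$ on the entire interval $[b(t_0^+),b(t_0^-)]$; solving the initial-boundary value problem on $(b(t_0^+),\infty)\times(t_0,t_0+\delta)$ with this flat initial datum and estimating via the heat kernel shows that $u_x(b(t),t)\to 0$ as $t\downarrow t_0$, contradicting the lower bound $2L$. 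Part~(2) then follows by combining part~(1) on each compact subinterval of $(0,\infty)$ with the initial condition $\fX_0=0$, $p(0)=1$, and the small-time asymptotics of $b$ established in \citet{CCCS1}, which together force $b(0^+)=0$.

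The principal obstacle is the rigorous justification of the flux identity and the pointwise linear lower bound on $u$, since $u$ and $b$ are a priori only available as viscosity solutions and could exhibit delicate boundary behavior. Making the Hopf-type comparison quantitative in a neighborhood of a suspected jump, where the geometry of $\{u=0\}$ need not even be Lipschitz, will be the technical core of the argument and will likely require carefully tailored sub- and supersolutions adapted to the local shape of the coincidence set.
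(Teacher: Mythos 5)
The proposition is quoted from \citet{CCCS2} (Proposition 6) rather than proved in this paper, so a line-by-line comparison is not possible; but your proposal has a circularity that would prevent it from working as written. Both the Stefan flux identity $p(s)-p(t) = \tfrac12\int_s^t u_x(b(\tau),\tau)\,d\tau$ and the pointwise linear lower bound $u(x,t) \geqslant c\,(x-b(t))_+$ presuppose that the trace $u_x(b(\tau),\tau)$ is well-defined and that the coincidence set $\{u=0\}$ has controlled geometry near the candidate jump --- which is precisely what you are trying to establish. Near an a priori merely upper semi-continuous boundary neither is available; you flag this as the ``technical core,'' but the obstacle is structural, not just technical: the bootstrap has no base case. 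A secondary issue is that usc $b$ need not have one-sided limits $b(t_0^{\pm})$, so the clean ``upward jump / downward jump'' dichotomy and the choice of $x_0\in(b(t_0^-),b(t_0^+))$ must be replaced by $\liminf/\limsup$ statements with carefully chosen comparison cylinders.

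Both jump cases can in fact be disposed of without the flux identity or the non-degeneracy bound, and this also clarifies where the hypothesis $L>0$ actually enters. For an ``upward jump'' (i.e.\ $\liminf_{t\uparrow t_0}b(t) < b(t_0)$), choose a past cylinder $(x_0-r,x_0+r)\times(t_0-\eta,t_0)$ lying strictly above the boundary for $t<t_0$ but with $x_0<b(t_0)$; there $u\geqslant 0$ solves the heat equation, $u>0$, yet $u(x_0,t)\to u(x_0,t_0)=0$ as $t\uparrow t_0$ by continuity of $u$. The parabolic Harnack inequality then forces $u\equiv 0$ on an earlier time-slab, contradicting $u>0$ in $Q_b$. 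This uses only continuity of $w$ (hence of $p$), not $L>0$. For a ``downward jump'' of size $\delta$ at $t_0$, surviving paths at time $t_0$ sit at or above $b(t_0)$; to be absorbed during $(t_0,t_0+\eta)$ they must cross a level at least $\delta$ below their starting point within time $\eta$, an event of conditional probability $O\bigl(e^{-\delta^2/(2\eta)}\bigr)=o(\eta)$ by the reflection principle. Hence $p(t_0)-p(t_0+\eta)=o(\eta)$, violating $L(p,T_1,T_2)>0$. So $L>0$ is what excludes downward jumps via a barrier/first-passage estimate, not by driving a uniform positive lower bound on $u_x$ along the free boundary.
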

In particular, if $p(t) = e^{-\lambda t}$, then $L(p,0,T) = e^{-\lambda T}>0$, yielding continuity of the boundary in the problem 
posed by Shiryaev. 

\medskip
{\bf The purpose of this paper is to study higher-order regularity properties of the boundary $b$ in the one-sided inverse problem.}

\medskip
Several other papers have also studied the inverse problem. \citet{EkstromJanson} show that the solution of the 
inverse first-passage problem is the same as the solution of a related optimal stopping problem, and present an analysis 
of an associated integral equation for the stopping boundary $b$. Integral equations related to the problem are discussed 
in~\citet{Pe1} and \citet{PeskirShiryaev}. \citet{Abundo} studied the small-time behaviour of the boundary $b$. A rigorous 
construction of the boundary based on a discretization procedure was recently presented in~\citet{Potiron}.

\begin{remark}
Traditionally, the forward problem is studied for upper-semi-continuous (usc) $b$ and the conventional survival probability, $\hat p$, is defined in term of the
first crossing time, $\hat\tau$, by
\bess \qquad \hat\tau(\omega) := \inf\{t>0\;|\; \fX_t(\omega)\leqslant b(t)\}, \qquad 
\hat p(t):=\BbbP(\{\omega\in\Omega\;|\;\hat\tau(\omega) >t\}).\eess
Here, the  survival probability, $p=\cP[b]$ in (\ref{1p}), is defined by
\bess \tau(\omega) :=\inf\{t>0\;|\; \fX_t(\omega)<b(t)\}, \qquad p(t) := \BbbP(\{\omega\in\Omega\;|\; \tau(\omega)\geqslant t\}).
\eess
It is shown in \cite{CCCS2} that $\cP[b]$ is well-defined for each $b$. In addition, define  $b^*$
 and $b_-^*$ by
\bess  b^*(t):=\max\Big\{b(t),\;\varlimsup_{s\to t} b(s)\Big\},\quad b^*_-(t):=\varlimsup_{s\nearrow t} b(s).\label{3a.bstar}\eess
Then (i) $\cP[b]=\cP[b^*]$, (ii) when $b=b^*$,  $\tau=\hat\tau$ almost surely,  and (iii) when $b^*=b_-^*$, $\cP[b]\in C((0,\infty))$.
Since $\cP[b]=\cP[b^*]$, it is convenient for the inverse problem to restrict the search to $b$  in the class of usc functions, i.e., those $b$ that satisfy $b=b^*$.  In \cite{CCCS2} it is  shown that for every $b\in P_0$, where
\bes
 P_0 := \{ p\in C([0,\infty))\; \;|\;\;
p(0)=1\geqslant p(s)\geqslant p(t)>0\quad\forall\, t> s\geqslant 0\},\label{P0}
 \ees
the inverse problem admits a unique usc solution.
\end{remark}


\subsection{The Free Boundary Problems}
We introduce differential  operators $\cL$ and $\cL_1$ defined by
 \bess \cL \phi := \partial_t \phi -\tfrac12 \partial_x(\sigma^2 \partial_x\phi)+\mu \partial_x\phi,
 \qquad\cL_1 \phi:= \partial_t \phi-\tfrac12 \partial_{xx}^2 (\sigma^2\phi) +\partial_x(\mu\phi).\eess
 The
 survival distribution, $w$, and survival density,  $u$, are defined by
\bess w(x,t): =\BbbP( \tau \geqslant t, \fX_t>x),\qquad  u(x,t):= -\partial_x w(x,t). \eess 
We denote the  distribution  of $\fX_t$ by $w_0$ and its density by $u_0$:
\bess w_0(x,t):=\BbbP(\fX_t>  x),\qquad u_0(x) dx :=  \BbbP(\fX_0 \in(x,x+dx)).\eess

When $b$ is smooth, one can show that $(b,w,p)$ satisfies
\bes \label{1w} \left\{ \begin{array}{ll} \cL w=0 \quad &\hbox{for \ } x>b(t), t>0,
\\  \partial_x w(x,t) =0 &\hbox{for \ } x\leqslant b(t),t>0,
\\ w(x,0) =w_0(x,0) &\hbox{for \ } x\in\BbbR,\ \  t=0,
\\ p(t)=w(b(t),t)   &\hbox{for \ } x= b(t), t>0 ,\end{array}\right.
\ees
and  $(b,u,p)$ satisfies
\bes\label{1u} \left\{ \begin{array}{ll} \cL_1 u=0 \quad &\hbox{for \ } x>b(t), t>0,
\\  u(x,t) =0 &\hbox{for \ } x\leqslant b(t),t>0,
\\ u(x,0) =u_0(x) &\hbox{for \ } x\in\BbbR,\ \  t=0,
\\ 2\dot p(t)= - \sigma^2\partial_x u|_{x=b(t)+}  &\hbox{for \ } x=b(t), t>0 .\end{array}\right.
\ees
 Note that (\ref{1w}) and (\ref{1u}) are equivalent in the class of smooth functions via the transformation
\bess u(x,t)=-\partial_x w(x,t),\qquad w(x,t)=\int_x^\infty u(y,t)dy\qquad\forall\, x\in\BbbR,t\geqslant 0.\eess

 When $b$ is given and  regular, say, Lipschitz continuous, the forward problem can be easily handled  by first solving the initial--boundary value problem consisting of the first three equations in (\ref{1w})  and then evaluating $p$ from the last equation in  (\ref{1w}).
For the inverse problem, both (\ref{1w}) and (\ref{1u}) are free boundary problems since the domain
$Q_b:=\{(x,t)\;|\; x>b(t),t>0\}$, where the equations, $\cL w=0$ and $\cL_1 u=0$, are satisfied, is a priori unknown.
So far, there is little known concerning the  existence,  uniqueness, and regularity of classical solutions of the free boundary problems.
Here by a classical solution, $(b,w)$,  of (\ref{1w}) we mean that $w-w_0\in C(\BbbR\times[0,\infty)), \partial_x w\in C(\BbbR\times(0,\infty)), \partial_t w,\partial^2_{xx} w\in C(Q_b)$, and each equation in (\ref{1w}) is satisfied; similarly, by a classical solution, $(b,u)$, of (\ref{1u}), we mean that
$u+w_{0x}\in C(\BbbR\times[0,\infty)), \partial_t u, \partial_{xx}^2u\in C(Q_b)$, and each equation in (\ref{1u}) is satisfied.  
In this paper, we investigate the well-posedness of the free boundary problem and the smoothness of the free boundary.
\medskip

\subsection{The Weak Formulation}

In \cite{CCCS1}, viscosity solutions for the inverse problem,   based on   the variational inequality
\bess \max\{ \cL w, w-p\}  =0\quad\hbox{in \ }\BbbR\times(0,\infty),\eess
 are  introduced.  It is shown that for any given probability distribution $p$ on $[0,\infty)$, there exists a unique viscosity solution.
This was followed up in~\citet{CCCS2} in which it was shown that the viscosity solution of the variational inequality 
gives the solution of the (probabilistic) inverse problem. 
 For easy reference,  we quote the relevant results.\bigskip


\begin{definition}\label{def1}
Let $p\in P_0$  be given where $P_0$ is as in {\rm(\ref{P0})}.    A {\rm viscosity solution} of the inverse problem associated with $p$ is a function $b$ defined by
\bes b(t):=\inf\;\{ x\in\BbbR\;|\; w(x,t)<p(t)\},\quad\forall\, t>0, \label{2.ba}\ees
 provided that $w$
    has   the following properties:

\begin{enumerate}
\item  $w\in C(\BbbR\times(0,\infty))$, $\lim_{t\searrow 0}\|w(\bolddot,t)
-\BbbP(\fX_t>\,\boldsymbol{\cdot}\,)\|_{L^\infty(\BbbR)}=0$;

\item   $0\leqslant w\leqslant p$ in
$\BbbR\times(0,\infty)$ and
$\cL w =0
$ in the set
$\{(x,t)\;|\; t>0, w(x,t)<p(t)\}$;

\item  If for a smooth
$\varphi$, $x\in\BbbR$ and $ t>\delta>0$,  the function $\varphi-w$ attains its  local minimum on $[x-\delta,x+\delta]\times [t-\delta,t]$ at
$(x,t)$,
 then  $\cL \varphi(x,t)\leqslant 0$.
\end{enumerate}
\end{definition}
\bigskip

One can verify that if $(b,w)$ (or $(b,u)$) is a classical solution of the free boundary problem (\ref{1w}) (or (\ref{1u})), then $b$ is a viscosity solution of the inverse problem associated with $p$.\bigskip



\begin{proposition}[{\bf Well-posedness of the Inverse Problem \cite{CCCS1,CCCS2}}]
Let $p\in P_0$ be given.

\begin{enumerate}
\item  \cite{CCCS1}: There exists a unique viscosity solution, $b$, of the inverse problem associated with $p$.

\item \cite{CCCS2}: The viscosity solution  is a usc solution of the inverse problem, i.e. $b=b^*$ and $\cP[b]=p$.

\item \cite{CCCS2}: There exists a unique usc  solution of the inverse problem associated with $p$.
\end{enumerate}
\end{proposition}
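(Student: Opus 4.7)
The plan is to prove the three parts in order, building the viscosity solution first and then using it as the bridge to the probabilistic formulation.

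\textbf{Part 1 (existence and uniqueness of the viscosity solution).} My approach would be via penalization. For each $\varepsilon>0$, consider the initial value problem
\begin{equation*}
\cL w^\varepsilon + \tfrac{1}{\varepsilon}(w^\varepsilon - p)^+ = 0 \quad\text{in } \BbbR\times(0,\infty),\qquad w^\varepsilon(x,0)=w_0(x,0),
\end{equation*}
which admits a unique smooth bounded solution by classical parabolic theory, since the penalty term is monotone in $w^\varepsilon$. Standard estimates (maximum principle, interior Schauder bounds) produce a subsequential limit $w$ as $\varepsilon\searrow 0$ that is continuous, satisfies $0\le w\le p$, satisfies $\cL w=0$ in the open set $\{w<p\}$, and obeys the viscosity inequality $\cL\varphi(x,t)\le 0$ at any local minimum of $\varphi-w$. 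Uniqueness follows from a comparison principle: given two viscosity solutions $w_1,w_2$, one shows $w_1\le w_2$ by doubling variables, using that $p$ is an upper barrier common to both and that $\cL$ is degenerate parabolic with Lipschitz coefficients. Defining $b$ by (\ref{2.ba}) then gives a well-defined viscosity solution of the inverse problem.

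\textbf{Part 2 (the viscosity solution is a usc solution).} Let $b$ and $w$ be the viscosity solution from Part~1, and set $b^*$ as in the Remark. Define the probabilistic candidate
\begin{equation*}
\tilde w(x,t):=\BbbP(\tilde\tau\ge t,\;\fX_t>x),\qquad \tilde\tau:=\inf\{s>0\;|\;\fX_s<b^*(s)\}.
\end{equation*}
By the strong Markov property and standard diffusion theory, $\tilde w$ satisfies $\cL\tilde w=0$ on the open set $\{x>b^*(t)\}$, vanishes suitably below the barrier, and has the same initial condition as $w$. One then shows that $\tilde w$ is a viscosity sub- and super-solution of the same variational inequality by testing against smooth test functions and using Dynkin's formula. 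Uniqueness from Part~1 forces $\tilde w = w$, and evaluating at $x=b^*(t)$ (using that $\tilde w(b^*(t),t)=\BbbP(\tilde\tau\ge t)=\cP[b^*](t)$) yields $\cP[b]=\cP[b^*]=p$. The coincidence $b=b^*$ is extracted from the defining formula (\ref{2.ba}) together with upper semicontinuity of level sets of the continuous function $w-p$.

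\textbf{Part 3 (uniqueness of the usc solution).} Suppose $b_1$ and $b_2$ are two usc solutions with $\cP[b_i]=p$. Define $w_i(x,t):=\BbbP(\tau_i\ge t,\fX_t>x)$ where $\tau_i$ is the first-passage time of $b_i$. As in Part~2, each $w_i$ is a viscosity solution of the variational inequality, and by Part~1 must coincide: $w_1=w_2=:w$. The boundary reconstruction formula
\begin{equation*}
b_i(t) = \inf\{x\in\BbbR\;|\;w(x,t)<p(t)\}
\end{equation*}
(valid because usc barriers are recovered from their survival densities as the lower edge of the support) then gives $b_1=b_2$.

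\textbf{Main obstacles.} The principal technical difficulty lies in Part~2, in rigorously identifying the probabilistic object $\tilde w$ with the analytic viscosity solution. The boundary $b^*$ is only upper semicontinuous, so one cannot directly appeal to classical regularity of the Dirichlet problem on $Q_{b^*}$; instead one must approximate $b^*$ from above by smooth barriers, verify the variational inequality for the approximate survival functions, and pass to the limit while controlling the behavior of the first-passage time $\tilde\tau$ under perturbations of the barrier (e.g., via a no-regular-points argument). The boundary-recovery step in Part~3, which presumes that $\{w(\cdot,t)<p(t)\}$ is precisely $\{x<b_i(t)\}$ up to a set of zero harmonic measure, is the secondary delicate point and relies on the non-degeneracy of $\fX$ together with the usc hypothesis $b_i=b_i^*$.
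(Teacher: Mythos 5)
This proposition is not proved in the present paper; it is quoted verbatim from the authors' earlier works \cite{CCCS1,CCCS2}, with the explicit remark ``For easy reference, we quote the relevant results.'' There is therefore no proof in this paper to compare your sketch against. That said, your reconstruction is consistent with what this paper does reveal about the cited arguments. In particular, your penalization scheme for Part~1 matches the construction invoked in the proof of Lemma~\ref{le2.1}, where $w$ is obtained as the limit of solutions of $w^\vep_t - w^\vep_{xx} = -\beta(\vep^{-1}[w^\vep-p])$ with $\beta(z)=m(\max\{0,z\})^3$; your linear penalty $\vep^{-1}(w^\vep-p)^+$ is a variant, and the cubic/scaled form the paper uses is slightly more convenient for the uniform estimate $0\leqslant -\cL(w^\vep-w_0)\leqslant m$, but the two are interchangeable for existence.

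Your Parts~2 and~3 are plausible high-level descriptions of what CCCS2 must establish, and you have correctly identified the genuine difficulty: showing the probabilistic survival distribution $\tilde w$ is a viscosity solution in the sense of Definition~\ref{def1} when the barrier is merely usc, so that it can be matched to the analytic $w$ by uniqueness. Two points in your sketch are glossed: (a) the ``boundary reconstruction formula'' $b_i(t)=\inf\{x : w(x,t)<p(t)\}$ in Part~3 needs the additional fact that distinct usc barriers produce distinct survival distributions, which is not automatic since $\cP[b]=\cP[b^*]$ already shows $\cP$ is not injective on arbitrary barriers --- the usc normalization is exactly what fixes this, and a proof must make that precise; (b) in Part~2, deducing $b=b^*$ ``from upper semicontinuity of level sets of $w-p$'' is more delicate than stated, as it requires that $\{w(\cdot,t)<p(t)\}$ is an open half-line $(b(t),\infty)$ rather than a more general open set. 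Both are addressed in \cite{CCCS2}, and you flag them as obstacles, so I would regard the sketch as faithful at the level of detail available in this paper.
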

\bigskip

It is clear now that the viscosity solution is the right choice for the inverse problem.
  For convenience, in the sequel,  we shall call $b$, $(b,w)$, $(b,u)$, or $(b,w,u)$, the solution or the viscosity solution of the inverse problem, where $b$ is the viscosity solution boundary, $w$  is the viscosity solution for the survival distribution, and $u=-\partial_x w$ is  the viscosity solution for the survival density of the inverse problem associated with $p$.
We shall also call the curve  $x=b(t)$ the free boundary.
\medskip

\subsection{The Main Result: Higher Order Regularity} While the work of \cite{CCCS1} and \citet{CCCS2}  solves the inverse problem, 
and presents a basic study of weak regularity, 
here we make a detailed study of the regularity of the free boundary. The main result of this paper is the following, where 
$\llbracket\alpha+\frac12\rrbracket$  denotes  the integer part of $\alpha+\frac12$.\bigskip

\begin{theorem}[{\bf Regularity of the Free Boundary}] Let $p\in P_0$ be given and
  $(b,w,u)$ be the {\rm(viscosity)}  solution of the inverse problem associated with $p$.
Assume that
 $p\in C^1([0,\infty)), \dot p<0 \hbox{\ on\  }[0,\infty)$, and for some $\delta>0$,  either
\bes\label{1.up} \hbox{\rm  (i) \ }
 u_0=0\hbox{  on  }(-\infty,0], \ u_0 \in C^1([0,\delta]),\  u_0'(0+)>0,\qquad\hbox{\rm  or \  (ii) \ }
\ddot p \in L^1((0,\delta)).\ees

\begin{enumerate} \item Then $(b,w)$ and $(b,u)$ are classical solutions of {\rm(\ref{1w})} and {\rm(\ref{1u})} respectively
with \boldmath$b\in C^{1/2}((0,\infty))$\unboldmath.

 \medskip

\item If, in addition, for $\alpha > \frac12$ not an integer, $(t_1,t_2)\subset(0,\infty)$ one has $p\in C^{\alpha+\frac12}((t_1,t_2))$, 
then \boldmath$b\in C^\alpha((t_1,t_2))$\unboldmath.

\medskip

\item Finally, if for $\alpha\geqslant \frac12$ not an integer one has $p\in C^{\alpha+\frac12}([0,\infty))$, 
$u_0=0$ on $(-\infty,0]$, $u_0\in C^{2\alpha}([0,\delta])$, and $(u_0,p)$ satisfy all compatibility conditions up to order 
$\llbracket\alpha+\frac12\rrbracket$, including in particular the compatibility condition  $2\dot p(0)=-{\sigma^2(0,0)\; u_0'(0+)}$ 
when $\alpha\in[\frac12,\frac32)$, then \boldmath$b(0)=0$ {\bf and} $b\in C^\alpha([0,\infty))$\unboldmath.
\end{enumerate}

%
%
%
\label{mainth}\end{theorem}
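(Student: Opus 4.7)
My plan is to (i) upgrade the viscosity interpretation of Definition~\ref{def1} to a classical one near the free boundary, (ii) derive an initial half-H\"older estimate on $b$ by barrier comparison, (iii) flatten the free boundary by a hodograph map, and (iv) bootstrap the regularity via Schauder theory in the resulting fixed-domain quasilinear problem. The assumptions (\ref{1.up}) will only be invoked to initialize regularity at $t = 0$; on compact subintervals of $(0, \infty)$, only $\dot p < 0$ and the continuity of $b$ established in \cite{CCCS1,CCCS2} are needed.

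First, interior regularity of $u$ and $w$ in $Q_b$ is immediate from $\cL w = 0$, $\cL_1 u = 0$ and standard parabolic theory, and Proposition~\ref{WeakRegularityProposition} (extended to the diffusion setting of \cite{CCCS2}) gives $b \in C((0, \infty))$ because $\dot p < 0$. Hopf's lemma applied to $u \geqslant 0$ at a regular boundary point yields $u_x(b(t)+, t) > 0$, and differentiating $p(t) = w(b(t), t)$ while using $w_x(b(t)+, t) = 0$ and $\cL w = 0$ produces the Stefan-type identity $2 \dot p(t) = -\sigma^2(b(t), t)\, u_x(b(t)+, t)$, so $u_x$ is bounded strictly below on compact subsets of the free boundary. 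To obtain $b \in C^{1/2}_{\mathrm{loc}}((0, \infty))$ I would construct, at any $(b(t_0), t_0)$, an upper barrier $(B_+, v_+)$ with $B_+(t) = b(t_0) + K \sqrt{t - t_0}$ and $v_+$ an explicit rescaled error-function profile solving $\cL_1 v_+ = 0$ in $\{x > B_+(t)\}$, chosen so that $-\tfrac12 \sigma^2 (v_+)_x |_{x = B_+(t)+} > |\dot p|$ once $K$ is large enough depending on $\inf|\dot p|$, $\|\sigma\|_\infty$, $\|\mu\|_\infty$ and the local scale. Comparing $(B_+, v_+)$ to $(b, u)$ via the third property of Definition~\ref{def1} precludes $b(t) > B_+(t)$; a symmetric lower barrier gives the matching estimate.

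With $u_x > 0$ on the free boundary and classical regularity in a one-sided neighborhood, I would flatten the boundary by inverting $u(\cdot, t)$ in $x$: for $s \in [0, s_0]$ define $\xi(s, t)$ by $u(\xi(s, t), t) = s$. A direct calculation converts $\cL_1 u = 0$ into a quasilinear uniformly parabolic equation for $\xi$ on the fixed strip $\{0 < s < s_0\}$ with principal part $\xi_t - \sigma^2(\xi, t)\, \xi_{ss}/(2 \xi_s^2)$, together with the nonlinear Neumann condition $\xi_s(0, t) = -\sigma^2(\xi(0, t), t)/(2 \dot p(t))$ at $s = 0$ and smooth Dirichlet data at $s = s_0$ pulled from $u$ on a curve strictly inside $Q_b$. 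When $p \in C^{\alpha + 1/2}$ the Neumann datum is $C^{\alpha - 1/2}$ in $t$, so iterated parabolic Schauder estimates for quasilinear equations with oblique-derivative data place $\xi$ in the parabolic H\"older space $C^{\alpha + 1/2,\, (\alpha + 1/2)/2}$ up to $s = 0$. Taking the trace gives $b(\cdot) = \xi(0, \cdot) \in C^\alpha$, which proves part~2 and, in the base case, part~1. For part~3, the compatibility assumptions guarantee that the hodograph data extend consistently to $\{t = 0\}$, while $b(0) = 0$ follows from $u_0 = 0$ on $(-\infty, 0]$ via the viscosity characterization (\ref{2.ba}); the same Schauder estimates then apply up to the initial time.

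The main technical obstacle I anticipate is the initial $C^{1/2}$ bound in step~(ii). Because $b$ is only known through the viscosity formulation, every comparison against a candidate barrier must be routed through Definition~\ref{def1}, and the profile $v_\pm$ must be constructed as a genuine competitor despite the variable coefficients $\sigma, \mu$ and the absence of any a priori modulus of continuity for $b$. Once this half-H\"older estimate is in hand, the hodograph change of variables produces a standard nondegenerate parabolic problem and the Schauder bootstrap is essentially routine.
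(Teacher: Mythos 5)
Your proposal runs into two linked difficulties that the paper's proof is specifically designed to circumvent, and the argument as sketched cannot close without addressing them.

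The first is circularity at the free boundary condition. You invoke Hopf's lemma ``at a regular boundary point'' to obtain $u_x(b(t)+,t)>0$, differentiate $p(t)=w(b(t),t)$ to get the Stefan identity $2\dot p=-\sigma^2 u_x|_{b+}$, and then feed this into the hodograph Neumann condition $\xi_s(0,t)=-\sigma^2/(2\dot p)$. But each of these steps presupposes that $b$ is regular enough for $u_x$ to admit a classical one-sided trace --- precisely what the theorem is trying to prove. At the start one only knows $b\in C((0,\infty))$ (from Proposition~\ref{WeakRegularityProposition}); this is not enough to apply the classical Hopf lemma, nor to differentiate $p=w(b(\cdot),\cdot)$, nor to assert that $\xi_s(0+,t)$ exists. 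The paper handles this by replacing the pointwise Stefan condition with a \emph{weak two-sided bound} (Lemma~\ref{le1a}), valid at every $t$ where $\dot p$ is continuous, requiring nothing about $b$ beyond $b(t)\in\BbbR$. That weak form is the only version of the free boundary condition the paper ever gets to use before the hodograph machinery is complete.

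The second difficulty is that your step (ii), the barrier construction for the a priori $C^{1/2}$ modulus of $b$, is both unresolved (you acknowledge this) and, as far as I can tell, not the right intermediate target. Even granting $b\in C^{1/2}_{\mathrm{loc}}((0,\infty))$, the hodograph problem you set up is a quasilinear equation with an \emph{oblique} condition at $s=0$ whose data $\xi_s(0,t)=-\sigma^2(\xi(0,t),t)/(2\dot p(t))$ you have not verified holds for the actual $\xi$ coming from $u$ --- because verifying it \emph{is} the hard part. A $C^{1/2}$ bound on the trace $b=\xi(0,\cdot)$ does not give you a Neumann datum for $\xi$. The paper's way around this is to define the family of solved initial-boundary value problems $Y^h$ in (\ref{2b}) (which do satisfy the oblique condition, by fiat), compare $Y^{\pm h}$ against the actual $b$ and $X$ using the generalized Hopf lemma (Lemma~\ref{lea1}) together with the weak free boundary bounds of Lemma~\ref{le1a}, and let $h\searrow 0$ to squeeze $b=Y(0,\cdot)$ and $X=Y$. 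Only then does the $C^{1/2}$ regularity drop out. In other words the $C^{1/2}$ estimate in the paper is a \emph{consequence} of the perturbation-comparison argument, not its precondition.

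There is also a quieter issue you omit: the paper performs the hodograph transformation not on $z=u(x,t)$ but on $z=v(x,t)$ with $v=u/K$ and $K$ solving (\ref{K}). This is because $\cL_1$ carries a zeroth-order term when $\mu\neq 0$, so constants are not solutions, which obstructs the maximum-principle arguments used in the level-set analysis (and in fact the equation for $u_x$ has a source $\mu_{xx}u$ that spoils the comparison used to show $v_x>0$ between nearby level curves; see the remark after the level-set lemma). If you keep $u$ you would need to deal separately with that term.

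In short, your overall direction (hodograph plus Schauder bootstrap) captures Section~\ref{ClassicalHodographSection} of the paper, i.e.\ the ``traditional'' arguments, which require a priori regularity of $b$ or the free boundary condition as hypotheses. The main theorem's proof is the content of Section~5, and it is precisely about removing those hypotheses via the $K$-scaling, the weak form of the free boundary condition, and the $Y^h$ comparison. Those are the ideas missing from your proposal.
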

\bigskip

\begin{remark} \label{re1} {\sl To derive the compatibility conditions,  we consider,
 for simplicity, the special case $\sigma\equiv \sqrt2$ and $\mu\equiv0$.
Set $U(x,t)=u(x+b(t),t)$.  Then
\bes\label{1U} \left\{\begin{array}{ll}
U_t=U_{xx}+\dot b\, U_x\qquad &\hbox{in \ }(0,\infty)\times(0,\infty),\medskip\\   U(0,\cdot)=0,
   U_x(0,\cdot)=-\dot p(\cdot)&\hbox{on \ }\{0\}\times(0,\infty),\medskip
   \\ U(\cdot,0)=u_0(\cdot)&\hbox{on \ } [0,\infty)\times\{0\}.\end{array}\right.\ees
    The $k$-th order compatibility condition is  the $(k-1)$-th  order derivative of  $\dot p(t)=-U_x(0,t)$  at  $t=0$ (with differentiation of  $U$ in time  being replaced by differentiation in space by    $U_{t}= U_{xx}+\dot b U_{x}$):
  \bess \frac{d^k p(t)}{dt^k}\Big|_{t=0} = -
  \frac{\partial^{k-1}U_x(0,t)}{\partial t^{k-1}} \Big|_{t=0} = -\frac{d^{2k-1} u_0(x)}{dx^{2k-1}} \Big|_{x=0}+ \cdots.\eess
   The $k$-th order derivative of $b$ at $t=0$ is obtained by differentiating the equation
   $\dot b(t)\dot p(t) =U_{xx}(0,t)$:
   \bess \frac{d^{k}b(t)}{dt^{k}} \Big|_{t=0} =
  \frac{\partial^{k-1}U_{xx}(0,t)}{\dot p(0)\;\partial t^{k-1}} \Big|_{t=0}+\cdots =
   \frac{1}{\dot p(0)} \frac{d^{2k}u_0(x)}{dx^{2k}}\Big|_{x=0} + \cdots.\eess
In particular, the first and second order compatibility conditions are
\bess \dot p(0)=-u_0'(0),\qquad  \ddot p(0)=- u_0'''(0) -\dot b(0) u_0''(0)\Big|_{\dot b(0)= {u_0''(0)/\dot p(0)}}.\eess
}\end{remark}

\begin{remark}{\sl
  For  $b$ to be continuous and bounded, it is necessary to assume that
$p$ is strictly decreasing as in Proposition~\ref{WeakRegularityProposition}. 
Indeed, if $p$ is a constant in an open interval, then $b=-\infty$ in that interval.

}
\end{remark}
\medskip
The main tool for the proof of Theorem~\ref{mainth} is the hodograph transformation, defined by the change of variables 
$x=X(z,t)$, the inverse of $z=u(x,t)$.
Since $u(b(t),t)=0$, we have $b(t)=X(0,t)$. 
Taking $\sigma\equiv\sqrt{2}$ to 
simplify the exposition, and proceeding formally, we can derive that 
$X$  solves quasi-linear pde:
\bes\label{1X} 
Y_t = Y_{z}^{-2} Y_{zz}+[z \mu(Y,t)]_z,\qquad \dot p(t)\;Y_z(0,t)= -1,\quad u_0(Y(z,0))=z.
\ees
  Assuming it can be shown that $X_z(\vep,\cdot)$ is positive on $[0,T]$, this system is studied on the set $\{(z,t)\,|\, z\in[0,\vep], t\in [0,T]\}$ for any fixed $T>0$ and a small  positive $\vep$ that depends on $T$. To complete the  system, we supply  the  boundary condition for $Y$ on  $\{\vep\}\times(0,T]$  by $Y_z(\vep,t)=X_z(\vep,t)$.

 \medskip
 The classical approach to the hodograph transformation 
(see, e.g.~\citet{FriedmanFreeBoundaryProblems} or~\citet{KinderlehrerStampacchia}) employs a bootstrapping strategy, 
assuming some 
initial degree of smoothness on $(p,b)$, and then using the regularity theory for~(\ref{1X}) to strengthen the 
regularity of $b$. In particular, standard results for quasi-linear equations 
(\citet{LSU, Lieberman}) can be used to derive the existence of a unique classical solution of~(\ref{1X}), and its 
regularity (including up to the boundary). The assumptions on $(p,b)$ are sufficient to reverse the hodograph transformation, 
and transfer the boundary regularity of $Y$ to $b(t)=Y(0,t)$ (and to show that indeed $Y=X$, where $X$ is defined through
$x = X(u(x,t),t)$). The results achieved through the classical approach are reviewed in Section~\ref{ClassicalHodographSection}.

\medskip
In order to prove Theorem~\ref{mainth}, we wish to employ the same strategy, but with weaker assumptions on the initial 
regularity of $(p,b)$. In doing so, we encounter two main difficulties, the first technical, and the second fundamental. The 
technical issue is that above the boundary, $u$ solves $\cL_{1} u =0$, and when $\mu\ne 0$
the operator $\cL_{1}$ has a zeroth order term. The maximum principle arguments employed in analyzing level sets in our proofs require a differential operator 
with no zeroth order.\footnote{In particular, we need that constant functions satisfy the differential equation.} 
We address
these related technical hurdles by considering $v=u/K$ for an appropriate scaling function $K$, defined as the solution of an auxiliary partial 
differential equation, such that above the boundary $\cL_{2}v=0$, where $\cL_{2} = \partial_{t} - \partial_{xx} + \nu\partial_{x}$ 
for some function $\nu$. The formal hodograph transformation then leads us to consider the partial differential equation: 
\begin{equation}
Y_t  = Y_z^{-2}\; Y_{zz}+ \nu(Y,t),\qquad  z\in (0,\vep), t\in(0,T],
\end{equation} 
together with the boundary condition $\cM Y = 0$,  where 
\begin{equation}
\cM Y = \begin{cases} Y(z,0) - X_{0}(z), & z\in [0,\vep], \\
\dot p(t)\, Y_z(0,t)+K(Y(0,t),t), & t\in(0,T], \\
Y_z(\vep,t) -X_z(\vep,t), & t\in(0,T].
\end{cases}
\end{equation}
Here, we encounter a fundamental difficulty due to the fact that we do not know a priori that $X_{z}$ is regular up to the boundary, i.e., we do not have the equation $\dot p(t) X_{z}(0,t) + K(X(0,t),t)=0$. While we can study the above problem analytically, we have not assumed the 
requisite regularity to show that $Y=X$, with $X(z,t):= \min\{ x\geqslant b(t)\;|\; v(x,t)=z\}$.\footnote{We can, however, show 
that $X$ is well-defined and regular enough inside the domain. It is the regularity for $X$ up to the boundary (because of the lack of a priori regularity of $b$)  
that is insufficient.  } This difficulty is surmounted by defining a family of perturbed equations with boundary operators 
$\cM^{h}$, $h\in\BbbR$, and making comparisons with their solutions $Y^{h}$. In particular, for small $h > 0$, we show 
that $Y^{-h}(0,\bolddot) < b < Y^{h}(0,\bolddot)$, $Y^{-h}(\vep,\bolddot) < X(\vep,\bolddot) < Y^{h}(\vep,\bolddot)$. 
Letting $h\searrow 0$, we are then able to obtain that $X\equiv Y$, $b=Y(0,\bolddot)$, and the required regularity of $b$.

\begin{remark}
For simplicity of exposition, throughout the paper we assume that $\sigma\equiv\sqrt2$. This can be done without loss of 
generality. Indeed, let
\bess \bY(x,t) = \int_0^x\frac{\sqrt 2\, dz}{\sigma(z,t)},\quad \tilde \mu(y,t)=
\frac{\sqrt 2 \mu(x,t)}{\sigma(x,t)}-\frac{\partial_x \sigma(x,t)}{\sqrt 2} -\int_0^x \frac{\sqrt 2\partial_t \sigma(z,t)}{\sigma^2(z,t)}\,dz\Big|_{x=\bX(y,t)}\eess
where $x=\bX(y,t)$ is  the inverse of $y=\bY(x,t)$. Then by It\^o's lemma,
the process $\{\fY_t\}$ defined by $\fY_t:={\mathbf Y}(\fX_t,t)$ is a diffusion process satisfying
$d\fY_t= \tilde\mu(\fY_t,t) dt +\sqrt 2\,d\fB_t$. The boundary crossing problem for $\{\fX_t\}$
with barrier $b$ is  equivalent to the boundary crossing problem for $\{\fY_t\}$ with barrier
$\tilde b(t):= \bY(b(t),t)$. In terms of the partial differential equations, this is equivalent to the change of variables
\bess y=\bY(x,t),\quad \tilde w(y,t)= w(\bX(y,t),t),\quad  \tilde u(y,t)=
  -\partial_y \tilde w(y,t) \ .\eess
{\bf We shall henceforth always assume that $\sigma\equiv\sqrt2$.}
\end{remark}

The remainder of the paper is structured as follows. In the next section, we recall a few properties of the solution of the inverse problem and prove a smoothing property of the diffusion: under  condition  (ii) of (\ref{1.up}), condition (i) is satisfied provided that the  initial time $t=0$ is shifted to $t=s$, for any $s$ outside a set of measure zero.
In Section 3 we provide an interpretation  of
 the free boundary condition $\sigma^2(b(t),t)\,u_x(b(t),t)=-2\dot p(t)$  for the viscosity solution.
 In Section 4 we present results that can be derived using the traditional approach to the hodograph 
 transformation $z=u(X(z,t),t)$. Section 5 presents the proof of Theorem~\ref{mainth}, beginning by presenting a required 
 generalization of the Hopf Boundary Point Lemma, then introducing the scaling function $K$ and the scaled survival density 
 $v$, and finally analyzing the family $Y^{h}$ of solutions to quasi-linear parabolic equations in order to derive our main 
 results.

\section{Regularity Properties of the Viscosity Solutions of the Inverse Problem}
In this section, we collect a few results concerning the regularity of the (viscosity) solution of the inverse
problem. Recall that $w_0(x,t)=\BbbP(\fX_t>x)$ and $Q_b=\{(x,t)\;|\; t>0, x>b(t)\}$.
\bigskip

\begin{lemma}\label{le2.1} Let $(b,w,u)$ be the  solution of the inverse problem associated with
$p\in P_0$. Then
\bes\label{2.3}  u\in C^\infty(Q_b), \quad   \cL_1 u=0<u \hbox{ in \ }Q_b, \quad\cL_1 u\leqslant 0\leqslant u\hbox{\ in }\BbbR\times(0,\infty),
\label{BasicUProperties}
\ees
where the inequalities above hold in the sense of distributions.

In addition, for any $T>0$, the  following holds:\medskip

 \begin{enumerate}
\item  If \  $\dot p\in  L^\infty((0,T))$, then 
 $ u+w_{0x}\in \bigcap_{\alpha\in(0,1)}  C^{\alpha,\alpha/2}(\BbbR\times[0,T])$;
 consequently, $(b,w)$ is a classical solution of the free boundary problem {\rm(\ref{1w})}
 on $\BbbR\times[0,T]$. \medskip

 \item If \   $\inf\{x\;|\;\BbbP(\fX_0\leqslant x)>0\}=0$ and $\sup_{t\in[0,T]}\dot p <0$, then $b(0)=0$ and $b\in C([0,T])$.
 \medskip

\item {} {\bf (Smoothing Property)}
If  $\dot p<0$ on $[0,T]$ and $\ddot p\in L^1((0,T))$, then for a.e.  $t\in(0,T]$,
 \bess{ \ }\quad w_t(\cdot,t)\in C^{1/2}(\BbbR),\quad u(\cdot,t) \in C(\BbbR)\cap C^{3/2}([b(t),\infty)), \quad \sigma^2(b(t),t)\, u_x(b(t)+,t)=-2\dot p(t).\eess
 \end{enumerate}
 \end{lemma}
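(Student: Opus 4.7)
The plan is to establish the global properties of $u$ first, then prove the three regularity statements in sequence, exploiting increasingly strong hypotheses on $p$. The preliminary statements in (\ref{BasicUProperties}) follow from standard arguments: inside $Q_b$ the equation $\cL w=0$ is uniformly parabolic with smooth coefficients, so interior Schauder estimates give $w\in C^\infty(Q_b)$, whence $u=-\partial_x w\in C^\infty(Q_b)$ with $\cL_1 u=-\partial_x(\cL w)=0$. Below the free boundary, $w\equiv p(t)$ forces $u\equiv 0$. Positivity of $u$ in $Q_b$ follows from $u=-\partial_x w$ together with $w(b(t),t)=p(t)>w(x,t)$ for $x>b(t)$, combined with the strong maximum principle for $\cL_1 u=0$. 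The global inequalities $\cL_1 u\leqslant 0\leqslant u$ encode non-negativity of $u$ and a non-negative jump of $\partial_x u$ across $x=b(t)$ arising from $\partial_x w=0$ on both sides.

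For Part 1 the plan is to work with $V:=w_0-w\geqslant 0$. Since $\cL w_0=0$ and $w\equiv p(t)$ in the coincidence set, $V$ satisfies $\cL V=-\dot p\,\chi_{\{w=p\}}$ in the distributional sense on $\BbbR\times(0,T)$, with zero initial data. With $\dot p\in L^\infty((0,T))$ the right-hand side is bounded and measurable, and (since $\mu,\sigma$ are smooth) standard parabolic $W^{2,q}$ regularity gives $V\in W^{2,q}_{\mathrm{loc}}(\BbbR\times[0,T])$ for every $q<\infty$. Sobolev embedding then yields $\partial_x V=u+w_{0x}\in C^{\alpha,\alpha/2}(\BbbR\times[0,T])$ for every $\alpha\in(0,1)$. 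Since $u$ is continuous across $\{x=b(t)\}$ with $u(b(t),t)=0$ while $\partial_x w\equiv0$ below, the one-sided derivatives of $w$ in $x$ match on the free boundary and $(b,w)$ is a classical solution of (\ref{1w}).

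For Part 2 the hypothesis $\sup_{[0,T]}\dot p<0$ is exactly the condition $L(p,0,T)>0$ of Proposition \ref{WeakRegularityProposition}. The proof from \cite{CCCS2} adapts to our diffusion without essential change, since $\sigma\equiv\sqrt 2$ and $\mu$ is smooth and bounded, yielding $b\in C([0,T])$. To see $b(0)=0$, the assumption $\inf\mathrm{supp}(\fX_0)=0$ forces $w(x,0)<1$ for every $x>0$, so the obstacle $w\leqslant p$ is active at $x=0$ initially; combined with $w(b(t),t)=p(t)\to 1=w(0,0)$ and the continuity from Part 1, this yields $b(0+)=0$. For Part 3, $\ddot p\in L^1((0,T))$ implies $\dot p\in L^\infty_{\mathrm{loc}}$, so Part 1 applies. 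The Lebesgue differentiation theorem supplies a set of full measure on which $\ddot p(t)$ exists finitely; at such $t$, $\dot p$ is locally Lipschitz and parabolic Schauder applied to $\cL_1 u=0$ above the boundary bootstraps $u(\cdot,t)$ to $C^{3/2}([b(t),\infty))$ and $w_t(\cdot,t)$ to $C^{1/2}(\BbbR)$. Differentiating $p(t)=w(b(t),t)$ and using $\partial_x w=0$ on both sides of the free boundary gives $\dot p(t)=w_t(b(t)+,t)$; the equation $\cL w=0$ evaluated at $x=b(t)+$ with $\partial_x w(b(t)+,t)=0$ reduces to $w_t(b(t)+,t)=-\tfrac12\sigma^2(b(t),t)u_x(b(t)+,t)$, delivering the claimed free-boundary identity.

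The main obstacle is Part 3: the $L^1$ (rather than $L^\infty$) hypothesis on $\ddot p$ forces work at a.e.\ time slices, and the free-boundary condition can only be interpreted pointwise at Lebesgue points of $\ddot p$. The technical heart is promoting the interior $C^\infty$ regularity of $u$ to $C^{3/2}$ up to the free boundary at such time slices without assuming any a priori regularity of $b$ beyond the mere continuity established in Part 2, since the hodograph-transformation machinery developed later in the paper is not yet available at this stage.
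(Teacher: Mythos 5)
Your proposal for equation (\ref{BasicUProperties}) and for Parts 1 and 2 is broadly sound. Part 1 in particular follows a genuinely different route from the paper: you observe directly from complementarity that $V:=w_0-w$ solves $\cL V=-\dot p\,\chi_{\{w=p\}}$ and invoke $W^{2,q}$ theory plus Sobolev embedding, whereas the paper works with the penalized approximation $w^\vep$ (with $0\leqslant-\cL(w^\vep-w_0)=\beta(\vep^{-1}(w^\vep-p))\leqslant m$), applies a Krylov-type $C^{1+\alpha,(1+\alpha)/2}$ estimate to $w^\vep-w_0$, and passes to the limit. Both are standard for obstacle problems; yours is slightly more direct but needs the preliminary fact that $w_x$ has no jump across the free boundary so that $\cL w$ has no singular part, which is implicitly justified by the second inequality in (\ref{BasicUProperties}) together with $u\geqslant 0$. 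Part 2 coincides with the paper's reference to the earlier continuity result.

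Part 3 is where the proposal breaks down, and you essentially flag this yourself in your last paragraph without resolving it. The step ``at a Lebesgue point of $\ddot p$, parabolic Schauder applied to $\cL_1 u=0$ above the boundary bootstraps $u(\cdot,t)$ to $C^{3/2}([b(t),\infty))$'' does not work: Schauder estimates up to the lateral boundary require regularity of the boundary, which is precisely what is not available (at this stage $b$ is merely continuous). Moreover, temporal pointwise differentiability of $\dot p$ at a single time $t$ does not by itself yield spatial H\"older regularity of a time-slice $u(\cdot,t)$ up to $x=b(t)$. The paper's argument is entirely different and is the missing idea: it is a weighted energy estimate on the penalized problem. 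One derives, by integrating the equation against $w^\vep_t\zeta$ and integrating by parts, an identity whose time-integral against $t^2$ gives
\[
\int_0^T\!\!\int_\BbbR t^2\,(w^\vep_{xt})^2\,\zeta\,dx\,dt \leqslant C(M,T),
\]
with the only role of $\ddot p\in L^1$ being to control the term $\int_0^T t^2\ddot p(t)\int_\BbbR\beta\zeta\,dx\,dt$ via the uniform bound $\int\beta\zeta\,dx\leqslant(M+N)m$. Passing $\vep\to0$ and letting $\eta\searrow 0$ gives $\int_\eta^T\int_\BbbR w_{tx}^2\,dx\,dt<\infty$ for each $\eta\in(0,T)$, so $w_{tx}(\cdot,t)\in L^2(\BbbR)$ for a.e.\ $t$, and the one-dimensional Sobolev embedding $H^1(\BbbR)\hookrightarrow C^{1/2}(\BbbR)$ delivers $w_t(\cdot,t)\in C^{1/2}(\BbbR)$. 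The remaining claims ($u(\cdot,t)\in C^{3/2}([b(t),\infty))$ and the free-boundary identity) then follow from $w_t=\dot p$ below $b(t)$ and $w_t=-u_x$ above, exactly as you wrote. The energy-estimate step is the concrete content you need to supply; without it the a.e.-$t$ regularity assertions are unproved.
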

\bigskip

 {\it Proof. }  For~(\ref{BasicUProperties}),  see~\citet{CCCS1}, Lemma 2.1, and~\citet{CCCS2}, 
 Proposition 5 (with Theorems 1 and 3 in this reference summarizing the solution of the inverse boundary crossing problem).  

\medskip
To simplify the presentation, we assume that $\mu\equiv 0$. The general case is analogous.\footnote{Unlike the assumption 
that $\sigma\equiv\sqrt{2}$, there are points in the paper where this is not the case. Hence, we remark on this explicitly 
when we take the drift to be zero.} 
 The viscosity solution for the survival distribution $w$ of the inverse problem satisfies the  variational inequality
 $\max\{\cL w, w-p\}=0$, which, according to \citet{CCCS1} (see also~\citet{FriedmanFreeBoundaryProblems}), can be approximated, as $\vep\searrow 0$,   
 by the solution of
\bess   w_t^\vep - w^\vep _{xx} =-\beta(\vep^{-1}[w^\vep-p])\hbox{\ in \ }\BbbR\times(0,\infty),
  && w^\vep(\cdot,0)=w_0(\cdot,0)\hbox{ \ on \ }\BbbR\times\{0\}\eess
where  $\beta(z)= m\cdot(\max\{0,z\})^3$ with $m= \|\dot p\|_{L^\infty((0,T))}$. It is worth mentioning that one can further regularize  $(p,w_0,\beta)$  to smooth $(p^\vep,w_0^\vep,\beta^\vep)$ so that the solution is not only smooth, but also monotonically  decreasing in $\vep$; see  \cite{CCCS1} for details. We set $u^\vep=-\partial_x w^\vep$, and 
$\cG \varphi = \cL \varphi + \beta(\vep^{-1}[\varphi(x,t)-p(t)])$.
\medskip

1. Since $\cG (p+\vep) = \dot p + m \geq 0 = \cG w^{\ve}$ and $w^{\vep}(\cdot,0) \leq p(0)$, the 
Comparison Principle yields that $w^{\vep} \leq p+\vep$. Similarly, 
$\cL(w_{0}-w^{\vep}) = \beta(\vep^{-1}[w^{\vep} - p]) \geq 0$ gives $w^{\vep} \leq w_{0}$ and $\cG w^{\vep} 
\geq 0 = \cG(0)$ yields that $w^{\vep} \geq 0$. From $w^{\vep} \leq p + \vep$, we have: 
\begin{equation}  \label{BasicBetaBound}
0\leqslant -\cL (w^\vep-w_0)=\beta([w^\vep-p]\vep^{-1}) \leqslant \beta(1)=m  \quad\hbox{in \ }\BbbR\times(0,T].
\end{equation}
A parabolic estimate~(e.g. \citet{KrylovHolder}, Lemma 8.7.1, page 122)
then implies that
\begin{equation} \label{FirstHolderEstimate}
\|w^\vep-w_{0}\|_{C^{1+\alpha,(1+\alpha)/2}(\BbbR\times[0,T])}\leqslant m C(\alpha),
\end{equation} 
for every $\alpha\in(0,1)$ where $C(\alpha)$ is a constant depending only on $\alpha$.

Let $\vep_{n} \searrow 0$, and let $\beta\in(\alpha,1)$. Then there exists a subsequence $\vep_{n_{k}}$
such that $w^{\vep_{n_{k}}} - w_{0} \to w-w_{0}$ in 
$C^{1+\alpha,(1+\alpha)/2}_{\mathrm{loc}}(\BbbR\times[0,T])$. As the limit $w$ is shown in \cite{CCCS1} to be the unique viscosity solution of the inverse problem associated with $p$,  the whole sequence in fact converges. 
In addition,
 passing to the limit in the estimate~(\ref{FirstHolderEstimate}) we see that
 $u+w_{0x}=w_{0x}-w_x \in \cap_{\alpha\in(0,1)} C^{\alpha,\alpha/2}(\BbbR\times[0,T])$.


\medskip
Note that a classical solution of (\ref{1w}) requires that the 
free boundary condition  $w_x(b(t),t)=0$ be well-defined. Since $w_x=w_{0x}-[w_{0x}+u]$ is continuous on $\BbbR\times(0,\infty)$, the equation $w_x(b(t),t)=0$ for $t>0$ is satisfied  in the classical sense.
Hence, $(b,w)$ is a classical solution of (\ref{1w}). This proves (1).

\medskip
2. This is shown in \cite{CCCS2}, Proposition 6 (see Proposition~\ref{WeakRegularityProposition} above).

\medskip

3. Using energy estimates, we will show that for each $\eta \in (0,T)$, $\int_{\eta}^{T} \int_{\BbbR} w_{tx}^{2}(x,t)dx\, dt < \infty$, 
and therefore in particular 
$w_{tx}(\cdot,t) \in L^{2}(\BbbR)$ for almost every $t\in(0,T)$, from which Sobolov Embedding yields 
$w_{t}(\cdot,t)\in C^{1/2}(\BbbR)$. Since $w_t(x,t)=\dot p(t)$ for all $x< b(t)$  and $w_t(x,t)=w_{xx}=-u_x$ for all $x>b(t)$, the last assertion of the lemma thus follows. 

\medskip
Suppose $\dot p<0$ on $[0,T]$ and $\ddot p\in L^1((0,T))$.
 Then $\dot p\in C([0,T])$, so $m:=\|\dot p\|_{L^\infty((0,T))}$ is finite. As above, we have  $0\leqslant w^\vep(x,t)\leqslant w_0$.
Using the Comparison Principle as in~\citet{CCCS1}, it can be shown that $u^{\vep}=-w^{\vep}_{x} \geq 0$. We then further have
$\cL(-w_{0x}-u^{\vep}) = \beta'(\vep^{-1}(w^{\vep}-p))\vep^{-1}u^{\vep} \geq 0$, and comparison yields $u^{\vep} \leq -w_{0x}$ on 
 $\BbbR\times(0,\infty)$. 

%

\medskip
 Let $\zeta=\zeta(x)$ be a  non-negative smooth function satisfying $\zeta(x)=0$ for $x<-M$,  
  $\zeta(x)=1$ for $x>4-M$, and $0\leqslant \zeta'(x) 
 \leqslant 1$ and $|\zeta''(x)|\leqslant 1$ for $x\in[-M,4-M]$. In the sequel, for convenience of notation, 
we consider $\beta$ as a function of $(x,t)$, being evaluated at $\vep^{-1}(w^{\vep}(x,t)-p(t))$.  
 Using the differential equation and integration by parts, one can derive the identity
\bess \frac{d}{dt} \int_\BbbR \Big(\frac12 w_t^\vep{}^2 + \dot p \beta\Big)\zeta\, dx
+\int_\BbbR \Big( w_{xt}^\vep{}^2 \zeta+ |\vep^{-1}(w^\vep_t-\dot p)^{2}\beta'|\zeta -\frac12 w_t^\vep{}^2 \zeta_{xx} \Big)dx 
= \ddot p \int_\BbbR \beta\zeta\,dx,\eess
so that 
\begin{align} 
\int_{\BbbR} (w_{xt}^{\vep})^{2}\zeta\, dx &= \ddot p \int_\BbbR \beta\zeta\,dx -
\frac{d}{dt} \int_\BbbR \Big(\frac12 w_t^\vep{}^2 + \dot p \beta\Big)\zeta\, dx - \int_{\BbbR} 
\left(   |\vep^{-1}(w^\vep_t-\dot p)^{2}\beta'|\zeta -\frac12 w_t^\vep{}^2 \zeta_{xx}      \right)\, dx. \nonumber 
\end{align} 

Multiplying both sides of the above equation by $t^{2}$, and then integrating over $(0,T)$ gives:
\begin{align}
\int_{0}^{T}\int_{\BbbR} t^{2} (w_{xt}^{\vep})^{2}\zeta\, dx\, dt&= \int_{0}^{T} \int_\BbbR t^{2} \ddot p(t) \beta\zeta\,dx -
\int_{0}^{T} t^{2} \frac{d}{dt} \int_\BbbR \Big(\frac12 w_t^\vep{}^2 + \dot p \beta\Big)\zeta\, dx\, dt 
\nonumber \\
&\;\;\;- \int_{0}^{T}\int_{\BbbR}t^{2} 
\left(   |\vep^{-1}(w^\vep_t-\dot p)^{2}\beta'|\zeta -\frac12 w_t^\vep{}^2 \zeta_{xx}      \right)\, dx\, dt \nonumber \\
&\leq \int_{0}^{T} \int_\BbbR t^{2} \ddot p(t) \beta\zeta\,dx\, dt
 -\int_{0}^{T} t^{2} \frac{d}{dt} \int_\BbbR \Big(\frac12 w_t^\vep{}^2 + \dot p \beta\Big)\zeta\, dx\, dt + \int_{0}^{T}\int_{\BbbR}
\tfrac{t^{2}}{2} w_t^\vep{}^2 \zeta_{xx} \, dx \,dt \nonumber \\
&= I_{1} + I_{2} + I_{3}. \nonumber
\end{align}

To control these terms, we make two preliminary estimates. First, since $w_0(\infty,\cdot)=0$, there exists $N>0$ such that
 $w_0(x,t) <p(t)$ for every $(x,t) \in [N,\infty)\times[0,T]$. Consequently, $\beta\equiv 0$ on $[N,\infty)\times[0,T]$, so that, for every $M>0$,
 \bes\label{BetaIntegralBound}
  \int_{-M}^\infty \beta\Big(\frac{w^\vep(x,t)-p(t)}{\vep}\Big)\, dx \leqslant (M+N)m\quad\forall\, t\in[0,T].
  \ees
Secondly, integrating  $(w_t^\vep+\beta)^2-[w^\vep_{x}(w^{\vep}_t+\beta)]_{x} + w_x^\vep{} w_{xt}^\vep = -\vep^{-1}\beta' w_x^\vep{}^2\leqslant 0$ over $\BbbR$ we obtain:
  \bess \int_\BbbR (w_t^\vep+\beta)^2 dx +\frac12\;\frac{d\;}{dt} \int_\BbbR w_x^\vep{}^2\leqslant 0.\eess
 Integrating this inequality multiplied by $2t$ over $[0,s]$, we get, for every $s\in(0,\infty)$,
  \bes\nonumber && 2\int_0^s\!\! \int_\BbbR t (w^\vep_t+\beta)^2 dxdt + {{s}} \int_\BbbR w_x^\vep{}^2(x,s) dx
  \leqslant
   \int_0^s\!\!\int_\BbbR  w_x^\vep{}^2(x,t)\,dxdt
  \\ &&\quad \leqslant \int_0^s\!\! \int_\BbbR  w_{0x}^2(x,t)\, dxdt\; \leqslant \int_0^s \!\! \max_{\BbbR} |w_{0x}(\cdot,t)|\int_\BbbR |w_{0x}(x,s)|dxdt\leqslant \frac{\sqrt s}{\sqrt{\pi }},
   \label{WPlusBetaSquaredEstimate}\ees
where we have used the fact that $\int_\BbbR |w_{0x}(x,t)|dx=1$  and  $|w_{0x}(x,t)| =\Gamma*u_0 \leqslant\sup_{z\in\BbbR} \Gamma(z,t)=(4\pi t)^{-1/2}$ where  $\Gamma(x,t)=(4\pi t)^{-1/2}e^{-x^2/4t}$.

\medskip
Returning to the estimation of $I_{j}$,  (\ref{BetaIntegralBound}) immediately gives: 
\begin{equation} 
I_{1} =\int_{0}^{T} \int_\BbbR t^{2} \ddot p(t) \beta\zeta\,dx\, dt\leq (M+N)m \int_{0}^{T} t^{2} |\ddot p(t)|\, dt. 
\end{equation}
\medskip

 For $I_{2}$, we integrate by parts:
 \begin{align}
I_{2}&= -\int_{0}^{T} t^{2} \frac{d\;}{dt} \int_\BbbR \Big(\tfrac{1}{2} w_t^\vep{}^2 + \dot p \beta\Big)\zeta\, dx\, dt \nonumber \\
&= \int_{0}^{T} t\int_\BbbR \Big(w_t^\vep{}^2 +2 \dot p \beta\Big)\zeta\, dx\, dt
-t^{2}\int_{\BbbR}\Big(\tfrac{1}{2} w_t^\vep{}^2 + \dot p \beta\Big)\zeta\, dx\Bigg|_{0}^{T} \nonumber \\
&\leqslant \int_{0}^{T} t\int_\BbbR \Big(w_t^\vep{}^2 +2 \dot p \beta\Big)\zeta\, dx\, dt + T^{2}|\dot p(T)|
\int_{\BbbR} \beta\zeta \, dx.
 \end{align} 
The second term on the right is bounded by~(\ref{BetaIntegralBound}), and the fact that $\ddot p\in L^{1}([0,T])$.
Now, using that $(w^{\vep}_t)^2\leqslant 2(w_t^{\vep}+\beta)^2+2\beta^2$:
\begin{align}
\int_{0}^{T} t\int_\BbbR \Big(w_t^\vep{}^2 +2 \dot p \beta\Big)\zeta\, dx\, dt 
&\leq 2 \left(\int_{0}^{T}\int_{\BbbR}t (w_t^{\vep}+\beta)^2 \zeta\, dx\, dt + \int_{0}^{T}\int_{\BbbR}t 
\beta(\beta+\dot p)\zeta\, dx\, dt \right) \nonumber \\
& \leq \frac{\sqrt{T}}{\pi} + (M+N)m^{2} \frac{T^{2}}{2},
\end{align}
using $\dot p < 0$, (\ref{BetaIntegralBound}), and~(\ref{WPlusBetaSquaredEstimate}).

\medskip
To control $I_{3}$, we again use $(w^{\vep}_t)^2\leqslant 2(w_t^{\vep}+\beta)^2+2\beta^2$, so that: 
\begin{align} 
I_{3} &= \int_{0}^{T}\int_{\BbbR}
\tfrac{t^{2}}{2} w_t^\vep{}^2 |\zeta_{xx}| \, dx \,dt \leq 
\int_{0}^{T} \int_{\BbbR} t^{2}((w_{t}^{\vep}+\beta)^{2} + \beta^{2}) |\zeta_{xx}| \, dx\, dt. 
\end{align}
As above: 
\begin{equation} 
\int_{0}^{T}\int_{\BbbR} t^{2}\beta^{2} |\zeta_{xx}| \,dx\, dt \leqslant m^{2}\tfrac{T^{3}}{3} \int_{\BbbR}|\zeta_{xx}|\, dx
\leqslant\tfrac{2}{3}m^{2}T^{3}.
%
\end{equation}
   
\medskip
Furthermore, from~(\ref{WPlusBetaSquaredEstimate}), we have: 
\begin{align}
\int_0^T\!\! \int_\BbbR t^{2} (w^\vep_t+\beta)^2\, dx\,dt  \leq \frac{T\sqrt{T}}{2\sqrt{\pi}}.
 \end{align}
 
%

Putting things together, we have that: 
\bess  \int_{0}^{T}\!\! \int_{4-M}^\infty t^2  w^\vep_{xt}{}^2(x,t)\, dx\,dt \leqslant C(M,T), \eess
where $C(M,T)$ is a constant depending on $M$ and $T$. 
 Sending $\vep\to 0$ we see that the above estimate also holds for $w$.
Finally, since $\dot p<0$ on $[0,T]$, we have  $b\in C((0,T])$.  For each $\eta\in(0,T)$   taking $M=4+\max_{[\eta,T]}|b|$ we obtain
 \bess \int_{\eta}^T\!\!\int_\BbbR  w_{tx}^2(x,t)\, dx\,dt <\infty.\eess
 Since $\eta$ can be arbitrarily small, we conclude that
 \bess  \int_{\BbbR} w_{tx}^{2}(x,t)\, dx <\infty \quad \hbox{for almost every \ }t\in(0,T).\eess

This completes the proof.
\qed
\medskip

\begin{remark}{\sl
We remark that  for $(b,u)$ to be a classical solution of the free boundary problem {\rm(\ref{1u})}, we need the existence of the  limit of $u_x(x,t)$, as $x\searrow b(t)$, for each $t\in(0,T]$. Here the conclusion of the third assertion in the previous lemma is not sufficient for $(b,u)$ to be a classical solution.
Thus, from an analytical viewpoint, finding classical solutions of the free boundary problem {\rm(\ref{1u})} is  much harder than that of the free boundary problem {\rm(\ref{1w})}.}
\end{remark}

\begin{remark}
Taking $\fX$ to be Brownian motion with $\fX_{0} \equiv 0$, we have the following:
\begin{enumerate} 
\item If $b(\cdot)\equiv a < 0$, then: 
\begin{gather} 
p(t) = 1 - 2\int_{|a|/\sqrt{t}}^{\infty} \varphi(x) \, dx, \quad \dot p(t) = 2 \varphi\left(\frac{|a|}{\sqrt{t}} \right)\cdot \frac{|a|}{t^{3/2}}, \nonumber \\
\ddot p(t) = |a|t^{-5/2}\varphi(|a|t^{-1/2})\left(-3 + |a|t^{-1}\right), \nonumber
\end{gather} 
where $\varphi$ is the probability density function of a standard normal random variable. Clearly (1) applies ($\dot p \in
L^{\infty}((0,T))$, and $(b,w)$ is a classical solution of~(\ref{1w}), as can be verified by direct computation). 
However, $\lim_{t\searrow 0} \dot p(t) = 0$, so that (2) and (3) do not apply (which is not surprising since otherwise we would have $b(0)=0$, contradicting the definition of $b$). 
\item For the exponential survival function $p(t) = e^{-\lambda t}$ for some $\lambda > 0$, $\dot p(t) = -\lambda p(t)$ and 
$\ddot p(t) = \lambda^{2} p(t)$, so that both (2) and (3) apply, and in particular $b(0)=0$, and the smoothing property in (3) 
of the lemma holds.
\end{enumerate}
\end{remark}

\bigskip

\section{The Free Boundary Condition For Viscosity Solutions}

We  interpret the free-boundary condition
$\partial_x(\sigma^2 u)|_{x=b(t)+}=-2\dot p(t)$  for the viscosity solution as follows.

\bigskip

\begin{lemma}\label{le1a} Let $p\in P_0$ and $(b,w,u)$ be the unique viscosity solution of the inverse problem associated with $p$. Suppose  $t>0, b(t)\in\BbbR$ and $\dot p$ is continuous at $t$. Then
for any function $\ell \in C^1([0,t])$ that satisfies $\ell(t)=b(t)$, we have:
 \bes\label{1a}\varliminf_{\underset{x > \ell(s),\  s \leqslant t}{(x,s)\to (b(t),t)}} \frac{\sigma^2(x,s) u(x,s)}{x-\ell(s)} \leqslant -2 \dot  p(t)
 \leqslant \varlimsup_{\underset{x > \ell(s),\  s \leqslant t}{(x,s) \to (b(t),t)}} \frac{\sigma^2(x,s) u(x,s)}{x-\ell(s)} .
\ees
\end{lemma}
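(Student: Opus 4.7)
The plan is to prove the two inequalities separately by contradiction, combining integration of the hypothesised pointwise bound on $u$ with the viscosity sub-/super-solution properties of $w$ (or equivalently of $p-w$). We adopt the standing convention $\sigma^{2}\equiv 2$.

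For $-2\dot p(t)\leqslant \varlimsup$, suppose that for some $\eta>0$ one has $\sigma^{2}u(x,s)<(-2\dot p(t)-2\eta)(x-\ell(s))$ for every $(x,s)$ in a one-sided neighbourhood $N$ of $(b(t),t)$ with $x>\ell(s)$ and $s\leqslant t$. Integrating $u=-\partial_{x}w$ from $\ell(s)$ to $x$ and using $w(\ell(s),s)\leqslant p(s)$ yields the pointwise lower bound
\begin{equation*}
w(x,s)\geqslant p(s)-\tfrac{1}{2}(-\dot p(t)-\eta)(x-\ell(s))^{2}
\end{equation*}
on $N\cap\{x>\ell(s)\}$. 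One then constructs a $C^{2}$ test function $\varphi$ such that $\varphi\geqslant w$ holds in a full (two-sided-in-$x$) neighbourhood of $(b(t),t)$, with equality at $(b(t),t)$. By item~3 of Definition~\ref{def1}, $\cL\varphi(b(t),t)\leqslant 0$; a careful construction, with second-derivative content at $(b(t),t)$ strictly between $-\dot p(t)$ and $-\dot p(t)-\eta$, gives instead $\cL\varphi(b(t),t)>0$, the desired contradiction.

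For $\varliminf\leqslant -2\dot p(t)$, the argument is symmetric: assume $\sigma^{2}u(x,s)>(-2\dot p(t)+2\eta)(x-\ell(s))$ on an analogous neighbourhood, derive the corresponding upper bound $w(x,s)\leqslant p(s)-\tfrac{1}{2}(-\dot p(t)+\eta)(x-\ell(s))^{2}$, and build a test function from below touching $w$ at $(b(t),t)$. The contradiction now comes from the classical equation $\cL w=0$ in $Q_{b}$ (where $w$ is smooth), which forces $\cL\varphi(b(t),t)\leqslant 0$ for any smooth $\varphi$ touching $w$ from below in the appropriate one-sided regime, while the explicit construction arranges $\cL\varphi(b(t),t)>0$.

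The principal obstacle is the construction of a genuinely $C^{2}$ test function of the form $p(s)-\tfrac{c}{2}((x-\ell(s))^{+})^{2}$. The natural candidate is only $C^{1}$ at $\{x=\ell(s)\}$, and standard mollifications kill the second-derivative content at $(b(t),t)$. A careful one-parameter family $\varphi_{\delta}$ is needed so that (i) the local minimum/maximum hypothesis is verified for every small $\delta$, and (ii) the second-derivative content at $(b(t),t)$ persists as $\delta\to 0^{+}$. Moreover, because $\ell$ may lie either below or above $b$ near $t$, $\varphi$ must dominate (or be dominated by) $w$ in a two-sided neighbourhood, and both cases $b(s)<\ell(s)$ and $b(s)\geqslant\ell(s)$ must be handled separately. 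Continuity of $\dot p$ at $t$ is used to linearise $p(s)=p(t)+\dot p(t)(s-t)+o(s-t)$ uniformly as $s\to t$.
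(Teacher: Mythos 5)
Your overall template is right: prove each inequality by contradiction, integrate the hypothesised bound on $u$ to get a one-sided quadratic bound on $w$, and then derive a contradiction from the viscosity framework. But three things go wrong, two of them fatal.

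First, the pairing of hypotheses to test-function directions is reversed. When the \emph{first} inequality ($\varliminf\leqslant -2\dot p(t)$) is negated, $u$ is bounded below by a linear function, which integrates to an \emph{upper} bound on $w$; the natural test function then lies \emph{above} $w$ on a small parabolic-boundary set, the difference has an interior minimum, and item~3 of Definition~\ref{def1} applies. When the \emph{second} inequality is negated, $u$ is bounded above, which integrates to a \emph{lower} bound on $w$; the test function lies \emph{below} $w$ on the parabolic boundary, the difference has an interior maximum, and the contradiction comes from $\cL w = 0$ in $Q_b$. Your plan distributes these the other way round: for the limsup case you claim $\varphi\geqslant w$, but a lower bound on $w$ cannot produce a function that sits above $w$ by comparison. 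Structurally the swapped pairing cannot be carried out from the estimates you derive.

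Second, in the limsup case the asserted lower bound $w(x,s)\geqslant p(s)-\tfrac12(-\dot p(t)-\eta)(x-\ell(s))^2$ does not follow from ``$w(\ell(s),s)\leqslant p(s)$''---that is the wrong inequality direction. You need $w(\ell(s),s)=p(s)$, i.e., $\ell(s)\leqslant b(s)$. This does in fact hold here, but it requires an argument: if $u(x,s)$ were positive at some $x$ arbitrarily close to $\ell(s)$ from above while $u(x,s)\leqslant c(x-\ell(s))$, then $u(\ell(s),s)=0$ by continuity, and since $u>0$ strictly in $Q_b$ one concludes $(\ell(s),s)\notin Q_b$, i.e., $\ell(s)\leqslant b(s)$. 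You need to say this.

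Third, you identify the non-smoothness of $p(s)-\tfrac{c}{2}((x-\ell(s))^+)^2$ as the ``principal obstacle'' and propose a mollified family $\varphi_\delta$ whose second-derivative content survives the limit. That route is both delicate and unnecessary. The device that makes the argument go through is to shift $\ell$ to $\ell_\vep(s):=\ell(s)\mp[\vep-(t-s)]$, use the \emph{globally smooth} quadratic $p(s)\mp\tfrac12 c\,(x-\ell_\vep(s))^2$ (no positive part), and compare only on the small region between $\ell_\vep$ (resp.\ $\ell$) and $\ell(s)+\sqrt\vep$. The perturbation simultaneously keeps the test function $C^\infty$ and forces the extremum of $\varphi_\vep-w$ to be attained in the interior of this region, not at the corner $(b(t),t)$; the viscosity condition in Definition~\ref{def1} then applies because any interior extremum over an open set is a fortiori an extremum over a small two-sided-in-$x$, one-sided-in-$t$ rectangle. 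Without this shift, your plan as stated cannot be completed.
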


\bigskip

\begin{remark}{\sl If we know that $b\in C^1$ and $ \partial_x u\in C(\overline{Q_b})$, then taking $\ell=b$ and using L'H{\^{o}}pital's rule we see that the two limits in {\rm(\ref{1a})} are both equal to $\partial_x(\sigma^2 u)|_{x=b(t)+}$, so {\rm(\ref{1a})} provides the free boundary condition $\partial_x(\sigma^2 u)|_{x=b(t)+}=-2\dot p(t)$.
 }\label{re1a}\end{remark}

{\it Proof. }  Suppose the first inequality in (\ref{1a})  does not hold.
 Then the first limit in (\ref{1a})  is strictly bigger than $-2\dot p(t)$ so  there exist
 small constants $m>0$ and $\delta\in(0,t]$ such that
\bess u(x,s)  \geqslant (2m-\dot p(t))(x-\ell(s))\qquad\forall\, s\in[t-\delta,t],
x\in (\ell(s),b(t)+M\delta]\eess
where $M=\|\dot{\ell}\|_{L^\infty((0,t))}$.
Consequently, for each $s\in[t-\delta,t]$ and $x\in[\ell(s),b(t)+M\delta]$,
\bes\label{1a1}\qquad w(x,s) =p(s)-\int_{-\infty}^x u(y,s)dy \leqslant p(s)-\int_{\ell(s)}^x u(y,s)dy
\leqslant p(s) - (m-\tfrac{\dot p(t)}{2})(x-\ell(s))^2.\ees
Now for any sufficiently small  positive $\vep$, consider the smooth function
\bess \phi_\vep(x,s):= p(s) -  \tfrac{1}{2}(m-\dot p(t))(x-\ell_\vep(s))^2,\qquad \ell_\vep(s):=\ell(s)-(\vep-(t-s)).\eess
 We compare $w$ and $\phi_\vep$ in the set
\bess Q_\vep:=\{ (x,s)\;|\; s\in(t-\vep,t], x\in(\ell_\vep(s), \ell(s)+\sqrt\vep)\}.\eess
We claim that the minimum of $\phi_\vep-w$ on $\overline{Q_\vep}$ is negative and is  attained at some point $(x_\vep,t_\vep)\in Q_\vep$.
First of all,
 $(b(t),t)\in Q_\vep$ and
$ \phi_\vep(b(t),t)-w( b(t),t) = -\tfrac{1}{2}(m-\dot p(t)) \vep^2 <0.$

Next we show that $\phi_\vep-w\geqslant 0$ on the parabolic boundary of $Q_\vep$, 
\begin{equation} 
\partial_{p} Q_{\vep} = \{ (x,s)\;|\; s = t-\vep, x\in [\ell(t-\vep), \ell(t-\vep)+\sqrt\vep]\} \cup \{ (x,s)\;|\; s\in (t-\vep,t], x\in \{\ell_{\vep}(s),\ell(s)+\sqrt\vep \}\}.\nonumber
\end{equation}
When $x=\ell_\vep(s)$, $\phi_\vep(x,s)=p(s)$ and $\phi_\vep-w\geqslant 0$.
For small enough $\vep$, on the remainder of the parabolic boundary of $Q_\vep$, we can verify that $x\in[\ell(s),b(t)+M\delta]$ so that  we can use (\ref{1a1}) to derive
\bess \phi_\vep-w &\geqslant &
 [m-\tfrac{\dot p(t)}{2}] \;[x-\ell(s)]^2 - \tfrac{1}{2}[m-\dot p(t)]\; [x- \ell_\vep(s)]^2
 \\ &=&  \tfrac{m}{2} [x-\ell(s)]^2 + \tfrac{1}{2}[m-\dot p(t)]\;[  2x-\ell(s)-\ell_\vep(s)]\;[\ell_\vep(s)-\ell(s)].\eess
On the lower part of the parabolic boundary of $Q_\vep$, we have $s=t-\vep$ so $\ell_\vep(s)=\ell(s)$ and
$\phi_\vep\geqslant w$.
Finally, on the right  lateral boundary of $Q_\vep$, we have  $x=\ell(s)+\sqrt\vep$ and $0\leqslant \ell(s)-\ell_\vep(s)\leqslant\vep$  so
\bess \phi_\vep-w \geqslant  \tfrac{m}{2} \vep -  \tfrac{1}{2}[m-\dot p(t)] [2\sqrt \vep+\vep] \vep. \eess
Thus, $\phi_\vep-w\geqslant 0$ on the parabolic boundary of $Q_\vep$ provided that $\vep$ is sufficiently small.

Hence, for every small positive $\vep$, there exists $(x_\vep,t_\vep)\in Q_\vep$ such that $\phi_\vep-w$ attains at $(x_\vep,t_\vep)$ the minimum of $\phi_\vep-w$ over $\overline{ Q_\vep}$.
Now by the definition of $w$ as a viscosity solution (see, e.g.~\citet{CCCS1}), we have $\cL \phi_\vep(x_\vep,t_\vep)\leqslant 0$. However, we can calculate (with $\cL=\partial_s-\partial_{xx}^2+\mu \partial_x$) 
\bess \cL \phi_\vep(x_\vep,t_\vep) &=& \dot p(t_\vep) +[m-\dot p(t) ]\;[  (x_\vep-\ell_\vep(t_\vep))(\dot\ell(t_\vep)-1-\mu(x_\vep,t_\vep))+1 ]
\\ &\geqslant&  m + [\dot p(t_\vep)-\dot p(t)] -[m-\dot p(t)]\;  [\vep+\sqrt\vep]\; (M+1+|\mu(x_\vep,t_\vep)|) .\eess
The last quantity is positive if we take $\vep$ sufficiently small. Thus we obtain a contradiction, and that  the first inequality in (\ref{1a}) holds.

\bigskip

Now we prove the second inequality in (\ref{1a}).
 Since $u\geqslant 0$,  the second inequality in (\ref{1a})  is trivially true when $\dot p(t)=0$.
Hence, we consider the case $\dot p(t)<0$.
 Suppose the second inequality in (\ref{1a}) does not hold. Then the second  limit in (\ref{1a})   is strictly less than $-2\dot p(t)$ so  there exist
 small constants $m\in(0,-\dot p(t)/2)$ and $\delta\in(0,t]$ such that
\bess u(x,s)  \leqslant [-\dot p(t)-2m]\;[x-\ell(s)]\qquad\forall\, t\in[t-\delta,t],\
x\in (\ell(s),\ell(t)+M\delta].\eess
 Since $u\in C^\infty(Q_b)$ and $u>0$ in $Q_b$,  the above inequality implies that $(\ell(s),s)\not\in Q_b$. Hence, we must have $\ell(s)\leqslant b(s)$, for every $s\in[t-\delta,t]$.
  Consequently,   for each $s\in[t-\delta,t]$ and $x\in[\ell(s),b(t)+M\delta]$,
\bes\label{1b1}\qquad w(x,s) =p(s)-\int_{-\infty}^x u(y,s)dy = p(s)-\int_{\ell(s)}^x u(y,s)ds
\geqslant p(s) + \tfrac{1}{2}[\dot p(t)+2m][x-\ell(s)]^2.\ees

Now for any sufficiently small  positive $\vep$, consider the smooth function
\bess \psi_\vep(x,s)= p(s) + \tfrac{1}{2}[\dot p(t)+m]\; [x-\ell_\vep(s)]^2,\qquad \ell_\vep(s):=\ell(s)+[\vep-(t-s)].\eess
 We compare $w$ and $\psi_\vep$ in the set
\bess Q_\vep:=\{ (x,s)\;|\; s\in(t-\vep,t], x\in(\ell(s), \ell(s)+\sqrt\vep)\}.\eess
We claim that the maximum of $\psi_\vep-w$ on $\overline{Q_\vep}$ is positive and is attained at some point $(x_\vep,t_\vep)\in Q_\vep$.

First of all,  $(\ell_\vep(t),t)\in Q_\vep$ and
$\psi_\vep(\ell_\vep(t),t)-w( \ell_\vep(t),t) =p(t)-w(b(t)+\vep,t)>0.$

 Next we show that $\psi_\vep-w\leqslant 0$ on the parabolic boundary of $Q_\vep$. On the left lateral  boundary of  $Q_\vep$, $x=\ell(s)\leqslant b(s)$, so  $w(x,s)=p(s)$ and
$\psi_\vep-w\leqslant 0$.
For the remainder of the parabolic boundary  we can use   (\ref{1b1}) to derive
\bess \psi_\vep-w &\leqslant &
  \tfrac{1}{2}[\dot p(t)+m]\; [x-\ell_\vep(s)]^2-\tfrac{1}{2}[\dot p(t)+2m] \;[x-\ell(s)]^2
  \\  &=& - \tfrac{m}{2} [x-\ell(s)]^2 -\tfrac{1}{2}[\dot p(t)+m]\;[  2x- \ell(s)- \ell_\vep(s)]\;[\ell_\vep(s)-\ell(s)]  .\eess
On the lower side of the parabolic boundary of $Q_\vep$, $s=t-\vep$ so $\ell(s)=\ell_\vep(s)$ and
$\psi_\vep-w \leqslant0$.
Finally, on the right lateral boundary of $Q_\vep$, we have  $x=\ell(s)+\sqrt\vep$ and $0\leqslant \ell_\vep(s)-\ell(s)\leqslant \vep$  so
\bess \psi_\vep-w
 \leqslant - m \vep + |\dot p(t)+m|\vep^{3/2}.   \eess
Thus, $\psi_\vep-w\leqslant 0$ on the parabolic boundary of $Q_\vep$ if $\vep$ is sufficiently small.
Consequently, there exists $(x_\vep,t_\vep)\in Q_\vep$ such that $\psi_\vep-w$ attains at $(x_\vep,t_\vep)$ the positive maximum of $\psi_\vep-w$ over $\overline{Q_\vep}$.

We claim that $x_\vep>b(t_\vep)$. Indeed, if $x_\vep\leqslant b(t_\vep)$, then $\psi_\vep(x_\vep,t_\vep)-w(x_\vep,t_\vep)=\psi_\vep(x_\vep,t_\vep)-p(t_\vep)\leqslant 0$ which contradicts the fact that the maximum of $\psi-w$ on $\bar Q_\vep$ is positive. Thus $x_\vep>b(t_\vep)$. It then implies that $w$ is smooth in a neighborhood of $(x_\vep,t_\vep)$. Consequently,
$\cL\psi_\vep(x_\vep,t_\vep) \geqslant \cL w(x_\vep,t_\vep)=0$. However, 
\bess \cL \psi_\vep(x_\vep,t_\vep) &=& \dot p(t_\vep) +[ \dot p(t)+m ]\;[(x_\vep-\ell_\vep(t_\vep))(\mu(x_\vep,t_\vep)-\dot\ell(t_\vep)-1)-1 ]
\\  &\leqslant & -m +  [\dot p(t_\vep)-\dot p(t) ] + |\dot p(t)+m| \sqrt\vep \; (|\mu(x_\vep,t_\vep)|+M+1) .\eess
The last quantity is negative if we take $\vep$ sufficiently small. Thus we obtain a contradiction, and the second inequality in (\ref{1a}) holds. This completes the proof. \qed

\bigskip

\section{The Traditional Hodograph Transformation} \label{ClassicalHodographSection}

The  hodograph transformation  considers the inverse, $x=X(z,t)$, of the function $z=u(x,t)$, so that for each fixed $z$, the curve $x=X(z,t)$ is the $z$-level set of $u$. A bootstrapping procedure is applied. By beginning with a weak regularity assumption 
on the free boundary $b(t) = X(0,t)$, and applying the regularity theory for the partial differential equation satisfied by $X$, 
one can obtain higher-order regularity of $b$. In this section, we present two results derived using this traditional approach, one for the viscosity solution and the other for the  classical solution of the free boundary problem.

 \bigskip

 \begin{proposition} Let $b$ be the solution of the inverse problem associated with
 $p\in P_0$. Assume that for some interval $I=(t_{1},t_{2})$, $b\in C^{1}(I)$, and $p\in C^{\alpha+1/2}(I)$, 
 where $\alpha$ is not an integer. Then $b\in C^{\alpha}(I)$.
\end{proposition}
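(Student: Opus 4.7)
The plan is to apply the classical hodograph transformation described just before the statement, combined with a parabolic Schauder bootstrap. Since $b\in C^1(I)$ is assumed and $I\subset(0,\infty)$ lies in the interior of time, the technical complications surrounding initial regularity of $b$ that appear in the proof of Theorem~\ref{mainth} do not arise; the argument is essentially the textbook hodograph/Schauder scheme.

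Fix a compact subinterval $[t_1',t_2']\subset I$. Because $b\in C^1$ on $[t_1',t_2']$ and $u$ solves the linear parabolic equation $\cL_1 u=0$ in $Q_b$ with $u\equiv 0$ on $x=b(t)$ and $u>0$ inside, the parabolic Hopf boundary point lemma (applicable since the boundary has $C^1$ regularity) gives $u_x(b(t)+,t)>0$, uniformly bounded below on $[t_1',t_2']$. Inverting the level set relation $z=u(x,t)$ therefore defines $X(z,t)$ on a strip $[0,\ve]\times[t_1',t_2']$ with $b(t)=X(0,t)$, and with $X_z=1/u_x$ bounded and bounded below away from $0$. A direct computation (cf.~(\ref{1X})) shows that $X$ satisfies the quasilinear parabolic equation
\begin{equation*}
X_t = X_z^{-2}X_{zz}+[z\mu(X,t)]_z \quad\text{in }(0,\ve)\times(t_1',t_2').
\end{equation*}
By Lemma~\ref{le1a} (with $\ell=b$ and $\sigma\equiv\sqrt 2$), the free boundary condition $u_x(b(t)+,t)=-\dot p(t)$ translates into the Neumann condition $\dot p(t)\,X_z(0,t)=-1$ at $z=0$. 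At $z=\ve$, $X(\ve,\cdot)\in C^\infty$ because $u$ is smooth in the interior of $Q_b$, and an initial profile $X(\cdot,t_1')$ is read off from $u(\cdot,t_1')$.

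The Neumann datum is $g(t):=-1/\dot p(t)$. From $p\in C^{\alpha+1/2}(I)$ we have $\dot p\in C^{\alpha-1/2}(I)$, so $g\in C^{\alpha-1/2}$ in $t$, which in the parabolic Hölder scale corresponds to the class $C^{2\alpha-1,\alpha-1/2}$ (trivially in $z$, at level $\alpha-1/2$ in $t$). Classical parabolic Schauder theory for quasilinear equations with oblique boundary conditions (\citet{LSU,Lieberman}) then yields $X\in C^{2\alpha,\alpha}$ up to $\{z=0\}$; hence $b(\cdot)=X(0,\cdot)\in C^\alpha([t_1',t_2'])$, and since $[t_1',t_2']\subset I$ is arbitrary, $b\in C^\alpha(I)$. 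For $\alpha>1$ a finite bootstrap is required: each Schauder step uses the current regularity of $X$ to upgrade the coefficient $X_z^{-2}$ (and the drift $\mu(X,\cdot)$) before Schauder at the next order can be invoked. The main technical obstacle is to organize this bootstrap so that the regularity of the coefficients keeps pace with the target regularity of the solution, but because $b\in C^1$ is available from the outset the iteration starts cleanly — this contrasts sharply with the perturbation argument developed in Section~5 for Theorem~\ref{mainth}, where only weak a priori regularity of $b$ is at hand.
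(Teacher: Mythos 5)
Your proposal follows the same route as the paper: use the $C^1$ assumption on $b$ plus the Hopf lemma to get $u_x(b(t)+,t)>0$, invert $z=u(x,t)$ to define $X(z,t)$, derive the quasi-linear equation $X_t = X_z^{-2}X_{zz}+[z\mu(X,t)]_z$, read off the Neumann datum $\dot p(t)\,X_z(0,t)=-1$ via Lemma~\ref{le1a}, and conclude by LSU/Lieberman Schauder theory that $X\in C^{2\alpha,\alpha}$ and hence $b=X(0,\cdot)\in C^{\alpha}$. That is exactly the paper's proof.

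There is, however, one genuine gap, precisely at the point where you say ``the iteration starts cleanly.'' Parabolic Schauder theory for the quasi-linear problem (and for its linearization) requires the leading coefficient $X_z^{-2}$ to lie in some $C^{\gamma,\gamma/2}$ class up to $\{z=0\}$; mere boundedness of $X_z$ from above and below, which is all the Hopf lemma gives you, is not a sufficient starting point for the first Schauder step. The paper supplies this missing ingredient by flattening the boundary -- set $U(y,t):=u(b(t)+y,t)$, which solves a linear parabolic equation on the fixed quarter-space $[0,\infty)\times I$ with coefficients involving only $\dot b\in C^0$ -- and applying linear Schauder theory there to obtain $u\in C^{1+\gamma,(1+\gamma)/2}$ up to $\{x=b(t)\}$ for every $\gamma\in(0,1)$, hence $u_x\in C^{\gamma,\gamma/2}$ up to the boundary, hence $X_z\in C^{\gamma,\gamma/2}(D)$. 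Only with this Hölder seed in hand is the invocation of LSU Theorem~IV.5.3 for the quasi-linear problem justified. (For the same reason, Remark~\ref{re1a} is what lets Lemma~\ref{le1a} be promoted to the clean identity $\dot p(t)X_z(0,t)=-1$: it presumes $\partial_x u\in C(\overline{Q_b})$, which again comes out of the flattening step, not directly from Hopf.) If you add this flattening argument, your proof matches the paper's in full.
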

\begin{proof}
As $b$ is already  $C^1$, we need only consider the case $\alpha>1$, so $p$ is continuously  differentiable.
Since $b\in C^1(I)$, by working on the function
$U(y,t):=u(b(t)+y,t)$ on the fixed domain $[0,\infty)\times I$ with the boundary condition
 $U(0,\cdot)=0$ and then translating the regularity of $U$  back to $u(x,t)=U(x-b(t),t)$, one can show that for every $\gamma\in(0,1)$,  $$u\in C^\infty (Q_b)\cap C^{1+\gamma,(1+\gamma)/2}(\{(x,t)\;|\; t\in I, x\in[b(t),\infty)\}).$$

 In addition, since $b\in C^1(I)$ and  $u\geqslant 0$, the Hopf Lemma (\citet{ProtterWeinberger}, 
 Theorem 3.3, pages 170-171)
 implies that $u_x(b(t),t)>0$. Consequently, as $u_x\in C^{\gamma,\gamma/2}(\{(x,t)\;|\; t\in I, x\in[b(t),\infty)\})$,  $b\in C^1(I)$, and $p\in C^{\alpha+1/2}(I)$ with $\alpha>1$,   we can use
  Lemma \ref{le1a} and Remark \ref{re1a}  to derive that $2\dot p(t)=-\sigma^2 u_x|_{x=b(t)+}<0$.

 Once we know the continuity of $u_x$ and the positivity of $u_x(b(t),t)$,  we can define the inverse $x=X(z,t)$ of $z=u(x,t)$
 for $t\in I$ and $x\in[b(t),b_1(t))$ where $b_1(t)=\min\{x>b(t)\;|\; u_x(x,t)=0\}$.
 Then implicit differentiation gives:\footnote{Recall that we take $\sigma\equiv\sqrt{2}$ to simplify the exposition.}
  $$ X\in C^\infty, \quad X_t=X_z^{-2}\,X_{zz}+[z\mu(X,t)]_z,\quad \hbox{ \ in \ }
 \{(z,t)\;|\; t\in I, z\in (0,u(b_1(t),t))\}.$$ Also, $X_z\in C^{\gamma,\gamma/2}(D)$ where
 $D=\{(z,t)\;|\; t\in I, z\in [0,u(b_1(t),t))\}$ and
 $X_z(0,t)=-1/\dot p(t)$ for  $t\in I$. It then follows from the 
 local regularity theory for parabolic equations (see \cite{LSU}, Theorem IV.5.3, pages 320-322)  that when $p\in C^{\alpha+1/2}(I)$ where $\alpha>1$ is not an integer,  since $X_z(0,\cdot)=-1/\dot p(\cdot)\in C^{\alpha-1/2}(I)$ we have $X\in C^{2\alpha, \alpha}(D)$. Consequently,  $b=X(0,\cdot)\in C^{\alpha}(I)$.
This completes the proof. 
\end{proof}

\bigskip

\begin{proposition}  Suppose $\dot p<0$ on $(0,\infty)$ and $p\in C^{\alpha+1/2}((0,\infty))$ where $\alpha\geqslant 1/2$ is not an integer. Assume that $(b,u)$ is a classical solution of the free
boundary problem {\rm(\ref{1u})} satisfying
\bes\label{4.fbc} \lim_{x>b(s), (x,s)\to (b(t),t)} u_x(x,s)= \frac{-2 \dot p(t)}{\sigma^2(b(t),t)}\quad\forall\, t> 0.\ees
Then $b\in C^\alpha((0,\infty))$.
\end{proposition}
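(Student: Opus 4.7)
The strategy is to mirror the hodograph argument from the previous proposition, but to initiate the bootstrap from the minimal regularity that is built into the hypothesis of a classical solution of (\ref{1u}) together with the free-boundary condition (\ref{4.fbc}), rather than from an a priori assumption $b\in C^1$. First I would record the following starting data: by (\ref{4.fbc}) the density $u$ admits a continuous extension of $u_x$ to $\{(x,t)\mid x\geqslant b(t),\ t>0\}$ with $u_x(b(t)+,t)=-2\dot p(t)/\sigma^2(b(t),t)>0$, continuous in $t$ and bounded below away from $0$ on compact subsets of $(0,\infty)$. Combined with $u\equiv 0$ on $\{x\leqslant b(t)\}$, $u>0$ in $Q_b$, and the continuous extension $u+w_{0x}\in C(\BbbR\times[0,\infty))$, a standard implicit-function argument upgrades the usc solution $b$ to a continuous function on $(0,\infty)$.

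Next, I would set up the hodograph transformation locally. Fix $t_0>0$ and a compact subinterval $[T_1,T_2]\ni t_0$. Using continuity of $b$ together with positivity and continuity of $u_x(b(t)+,t)$, one shows that the map $(x,t)\mapsto(u(x,t),t)$ is a homeomorphism from a one-sided tubular neighbourhood of $\{(b(t),t)\mid t\in[T_1,T_2]\}$ onto $[0,\varepsilon]\times[T_1,T_2]$ for some $\varepsilon>0$; its inverse $x=X(z,t)$ satisfies $b(t)=X(0,t)$, is continuous up to $z=0$, is $C^\infty$ on $(0,\varepsilon]\times[T_1,T_2]$ (since $u\in C^\infty(Q_b)$ and $u_x>0$ there), and by implicit differentiation converts $\cL_1 u=0$ into the quasi-linear equation
\begin{equation*}
X_t=X_z^{-2}X_{zz}+[z\mu(X,t)]_z\quad\text{in }(0,\varepsilon]\times[T_1,T_2].
\end{equation*}
The free-boundary condition (\ref{4.fbc}) becomes the oblique (Neumann) condition $X_z(0,t)=-1/\dot p(t)$ on $\{z=0\}$, and, since $\dot p<0$ and $p\in C^{\alpha+1/2}$, the datum $g(t):=-1/\dot p(t)$ lies in $C^{\alpha-1/2}([T_1,T_2])$ and is bounded away from zero.

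The regularity of $b$ is then extracted by bootstrapping. Interior parabolic regularity applied to the quasi-linear equation gives, for every $\gamma\in(0,1)$, $X\in C^{2+\gamma,1+\gamma/2}$ on any strip $(0,\varepsilon']\times[T_1',T_2']$ compactly contained inside $[T_1,T_2]\times(0,\varepsilon]$. Combining this interior regularity with the continuous extension $X_z\in C([0,\varepsilon]\times[T_1,T_2])$ inherited from the continuous extension of $u_x$ (which plays the role of the ``initial regularity'' needed to legitimately apply boundary Schauder theory), I would invoke the global boundary Schauder estimates for quasi-linear parabolic equations with oblique data (LSU, Theorem~IV.5.3; or Lieberman's analogue) to conclude $X\in C^{2\alpha,\alpha}$ up to the boundary $\{z=0\}$ on a slightly smaller rectangle. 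Restricting to $z=0$ yields $b\in C^\alpha([T_1',T_2'])$, and since $t_0$ is arbitrary, $b\in C^\alpha((0,\infty))$.

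The principal obstacle is the last bootstrap step: applying Schauder estimates up to $\{z=0\}$ requires some preliminary Hölder control on $X$ and $X_z$ there, whereas (\ref{4.fbc}) gives only continuity. Overcoming this requires the finite-step scheme sketched above — first improve $X$ in the interior via the quasi-linear interior theory, then pass to the boundary using the continuous extension of $X_z$ as the seed for a single-jump Hölder improvement governed by the $C^{\alpha-1/2}$ regularity of the Neumann datum $-1/\dot p$. This is precisely the amount of boundary regularity needed, and no more; this is the sense in which the hypothesis $p\in C^{\alpha+1/2}$ is matched exactly by the conclusion $b\in C^\alpha$.
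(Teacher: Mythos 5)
Your proposal is correct and follows essentially the same route as the paper's proof: use (\ref{4.fbc}) and $\dot p<0$ to get uniform positivity and boundedness of $u_x$ in a one-sided strip along the free boundary over a compact time interval, invert $z=u(x,t)$ there to get $X$ solving $X_t=X_z^{-2}X_{zz}+[z\mu]_z$ with boundary datum $X_z(0,\cdot)=-1/\dot p\in C^{\alpha-1/2}$, and apply parabolic regularity (LSU IV.5.3) up to $\{z=0\}$ to conclude $b=X(0,\cdot)\in C^\alpha$. The paper is terser about the step you flag as the ``principal obstacle'' (passing from mere continuity of $X_z$ at $z=0$ to Hölder control via the boundary Schauder estimate), but the argument and the tools invoked are the same.
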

\bigskip

{\it Proof.}  Let $[\delta,T]\subset(0,\infty)$ be arbitrarily fixed. The conditions (\ref{4.fbc}) and $\dot p<0$  imply that  the set  $\{(b(t),t)|\; t\in[\delta,T]\}$ is compact, so that  there exists $\delta_1>0$ such that
$u_x$ is bounded and uniformly positive  in $D=\{(x,t)\;|\; t\in[\delta,T], x\in (b(t),b(t)+\delta_1]\}.$ Consequently, the inverse $x=X(z,t)$ of $z=u(x,t)$ is well-defined for $(x,t)\in D$. Setting $\vep=\min_{t\in[\delta,T]} u(b(t)+\delta_1,t)$ we have that $X\in C^\infty((0,\vep]\times[\delta,T])$,  $X_z$ is uniformly positive and bounded in $(0,\vep]\times[\delta,T]$ and
\bess X_z(0,t):= \lim_{z\searrow 0,s\to t} X_z(z,s)= \lim_{x>b(s),(x,s)\to (b(t),t)}\frac{1}{u_x(x,s)}=-\frac{1}{\dot p(t)}.\eess
Since $X$ satisfies $X_t=X_{z}^{-2} X_{zz}+[z\mu ]_z$ in $(0,\vep]\times[\delta,T]$, as above
local regularity then implies that $X$ defined on $(0,\vep]\times[\delta,T]$ can be extended onto $[0,\vep]\times[\delta,T]$ such that $X\in C^{2\alpha,\alpha}([0,\vep]\times(\delta,T])$.  Hence $b=\lim_{z\to 0}X(z,\cdot)=X(0,\cdot)\in C^{\alpha}((\delta,T])$. Sending $\delta\searrow 0$ and $T\to\infty$ we conclude that $b\in C^\alpha((0,\infty))$.\qed

\medskip


\section{Proof of The Main Result}
In this section, we prove our main result, Theorem \ref{mainth}. This provides regularity of the viscosity solution of the 
inverse problem without the a priori regularity assumptions used in the previous section. We begin with a technical 
result - a generalization of Hopf's Lemma in the one-dimensional case that is needed in our later arguments. We then 
introduce the scaling function $K$, and the new hodograph transformation for the scaled function $v = u/K$. Finally, 
we prove Theorem~\ref{mainth} by analyzing a family of perturbed equations related to the PDE satisfied by $X(z,t)$.

\subsection{A Generalization of Hopf's Lemma in the One-Dimensional Case}
In order to apply the weak formulation of the free boundary condition, we need the following extension of the classical Hopf Lemma.

\begin{lemma}[{\bf Generalized Hopf's Lemma}]\label{lea1} Let $\cL=\partial_t-a\partial_{xx}+c\; \partial_x+d$ where $a$, $c$ and $d$ are bounded functions and $\inf a>0$. Assume that $\cL \phi \geqslant 0$ in $Q:=\{(x,t)\;|\; 0<t<T, l(t)<x<r(t)\}$ where
$l$ and $r$ are Lipschitz continuous functions. Also assume that $\phi>0$ in $Q$. Then for every $\delta\in(0,T)$, there exists $\eta>0$ such that
\bess  \phi(x,t) \geqslant \eta [x-l(t)][
r(t)-x]\quad\forall\, x\in [l(t),r(t)], t\in [\delta,T].\eess
  Moreover, if in addition  $\phi(l(T),T)=0$, then $\phi_x(l(T),T)\geqslant \eta [r(T)-l(T)]$; similarly, if $\phi(r(T),T)=0$, then $\phi_x(r(T),T)\leqslant-\eta [r(T)-l(T)].$ 
\end{lemma}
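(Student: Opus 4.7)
The plan is to combine an interior positivity bound from the strong parabolic maximum principle with a uniform parabolic Hopf lemma near each lateral boundary, patching the estimates into the claimed quadratic bound; the derivative estimate is then immediate.

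First I would perform two reductions. Substituting $\tilde\phi=e^{-\lambda t}\phi$ for $\lambda$ large lets me assume $d\geq0$ without loss of generality, since the transformed operator has zeroth-order coefficient $d+\lambda\geq0$ without changing the other hypotheses. Independently, the change of variable $y=x-l(t)$ straightens the left boundary (and analogously for the right), producing an operator of the same form with $c$ replaced by $c-\dot l$, bounded since $l$ is Lipschitz. The strong maximum principle then forces $\phi>0$ at every interior point of $Q$; continuity and compactness yield $\phi\geq\eta_0>0$ on $K:=\{(x,t):\tau\leq t\leq T,\ l(t)+R_1\leq x\leq r(t)-R_1\}$ for any fixed $\tau\in(0,\delta)$ and sufficiently small $R_1>0$.

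The central step is to establish uniform linear lower bounds near each lateral boundary, namely $\phi(x,t)\geq\eta_1(x-l(t))$ on a left strip $\{l(t)\leq x\leq l(t)+r\}\times[\delta,T]$ and $\phi(x,t)\geq\eta_1(r(t)-x)$ on the symmetric right strip. At each $(l(t_0),t_0)$ with $t_0\in[\delta,T]$ I would construct a parabolic tangent ball $B_{t_0}=\{(x,t):\Phi<r^2,\ t<t_0\}$ with $\Phi:=(x-x_1)^2+\kappa(t_0-t)$ and $x_1=l(t_0)+r$, choosing $r$ small and $\kappa\geq 2r\,\mathrm{Lip}(l)$ so that $\overline{B_{t_0}}\cap\partial Q=\{(l(t_0),t_0)\}$ and $B_{t_0}\subset Q\cap\{t>\tau\}$ uniformly in $t_0$; then compare $\phi$ on the annular subregion $\{r_0^2\leq\Phi\leq r^2\}$ with the parabolic Hopf barrier $\psi=\eta(e^{-\alpha\Phi}-e^{-\alpha r^2})$, which for $\alpha$ sufficiently large satisfies $\cL\psi\leq 0$ on the annulus, vanishes on the outer sphere, and takes a fixed positive value on the inner sphere that lies inside $K$ where $\phi\geq\eta_0$. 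For $\eta$ chosen small, this gives $\psi\leq\phi$ on the parabolic boundary of the annulus and hence $\phi\geq\psi$ inside by comparison; evaluating at the tangent point yields the desired linear bound with $\eta_1$ independent of $t_0$. The main obstacle is precisely this uniformity: a flat rectangle with vanishing bottom edge cannot carry a subsolution barrier, since along such an edge $\psi_t>0$ would force $\cL\psi>0$, so the barrier cannot be made to vanish on any straight initial edge; the annular tangent-ball construction bypasses this because the inner sphere lies in the interior of $Q$ where the a priori bound $\eta_0$ is already available, and all resulting constants depend only on $\sup|a|$, $\inf a$, $\sup|c|$, $\sup|d|$, $\mathrm{Lip}(l)$, $\mathrm{Lip}(r)$, $\tau$, and $\eta_0$, not on $t_0$.

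Setting $\tilde r:=\sup_{[\delta,T]}(r-l)$, I patch the three estimates: on the middle band $l(t)+r\leq x\leq r(t)-r$, $\phi\geq\eta_0\geq(4\eta_0/\tilde r^2)(x-l)(r-x)$; on the left strip $\phi\geq\eta_1(x-l)\geq(\eta_1/\tilde r)(x-l)(r-x)$ since $r-x\leq\tilde r$, and symmetrically on the right. Taking $\eta$ as the minimum of the three constants proves the quadratic bound. For the ``moreover'' part, if $\phi(l(T),T)=0$, then the quadratic estimate at $t=T$ gives
\[
\phi_x(l(T),T)=\lim_{x\searrow l(T)}\frac{\phi(x,T)}{x-l(T)}\geq\eta(r(T)-l(T)),
\]
and the bound at $x=r(T)$ is analogous.
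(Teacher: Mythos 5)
Your overall strategy---strong maximum principle to obtain a uniform interior bound, a parabolic Hopf-type lemma near each lateral boundary point with uniformity in $t_0$, and then patching the linear boundary estimates with the interior estimate---is a reasonable route and would, if the local step were carried out correctly, yield the lemma. However, the central step as written has a genuine gap: the claim that $\cL\psi\leqslant 0$ holds on the full annulus $\{r_0^2\leqslant\Phi\leqslant r^2\}$ for large $\alpha$ is false. With $\Phi=(x-x_1)^2+\kappa(t_0-t)$ one has
\[
\cL\psi=\eta e^{-\alpha\Phi}\Big[\alpha\kappa+2a\alpha-4a\alpha^2(x-x_1)^2-2c\alpha(x-x_1)\Big]+d\,\eta\big(e^{-\alpha\Phi}-e^{-\alpha r^2}\big),
\]
and along the axial segment $\{(x_1,t):\ t_0-r^2/\kappa<t<t_0-r_0^2/\kappa\}$, which lies strictly inside the annulus, the quadratic term $-4a\alpha^2(x-x_1)^2$ vanishes and one is left with $\cL\psi=\eta\alpha(\kappa+2a)\,e^{-\alpha\Phi}+d\,\eta(\cdots)>0$ for every $\alpha>0$ (and the $d$-term does not help after your reduction to $d\geqslant0$). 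This is precisely the degeneracy you identified for a flat bottom edge, but your paraboloid does \emph{not} avoid it: the barrier only has the correct sign where $|x-x_1|$ is bounded away from zero. The standard repair is to localize the comparison region to a neighbourhood of the touching point, e.g.\ $K'=\{\Phi<r^2\}\cap\{x\leqslant l(t_0)+r/2\}$: there $|x-x_1|\geqslant r/2$ so $\cL\psi\leqslant0$ for $\alpha$ large; on the outer sphere $\psi=0\leqslant\phi$; on the new face $\{x=l(t_0)+r/2\}$, one checks that (for $r$ small and $R_1<r/4$) this slab lies in $K$, so $\phi\geqslant\eta_0$, while $\psi\leqslant\eta(e^{-\alpha r^2/4}-e^{-\alpha r^2})$, and $\eta$ can be chosen uniformly to make the comparison close. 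After this localization the rest of your argument (uniformity of the constants, patching, and the derivative estimate) goes through.

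For comparison, the paper avoids all patching by a single global barrier
\[
\psi(x,t)=\hat\eta\, e^{-Mt-L\,(x-(r(t)+l(t))/2)^2/2}\,\sin\frac{\pi\,(x-l(t))}{r(t)-l(t)},
\]
after slightly shrinking $l,r$ on $[0,\delta]$ so that $\phi(\cdot,0)\geqslant\hat\eta$ on $[l(0),r(0)]$ and smoothing the Lipschitz boundaries. The sine factor vanishes on both lateral boundaries and already encodes the quadratic degeneracy; the Gaussian and $e^{-Mt}$ factors absorb the transport terms coming from $c$ and the boundary motion $l',r'$, and a single choice of $L$ and then $M$ makes $\cL\psi\leqslant0$ on all of $Q$. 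One comparison then yields the full quadratic lower bound directly, without the tangent-ball and patching machinery; the cost is a somewhat more intricate computation of $\cL\psi$. Both routes are sound in principle, but your annular barrier as stated does not satisfy $\cL\psi\leqslant 0$ on its declared domain and must be cut down before the comparison principle can be invoked.
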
\bigskip

{\it Proof.} Without loss of generality, we can assume that $\phi(x,0)>0$ for all $x\in(l(0),r(0))$.
By modifying $r$ and $l$ in $[0,\delta]$ by
 $\tilde l(t)=l(t)+\vep [\delta-t] $ and $\tilde r(t)=r(t)-\vep [\delta-t]$ we can further assume that
 $\phi(x,0)$ is uniformly  positive on $[l(0),r(0)]$; i.e., there exists $\hat\eta>0$ such that
 $\phi(x,0)> \hat\eta$ for all $x\in[l(0),r(0)]$.
  Furthermore, by approximating $r$ by smooth functions from below and $l$ by smooth functions  from above, with the same Lipschitz constants of $r$ and $l$, we can assume that both $r$ and $l$ are smooth functions.

  Now for  large positive constants $M$ and $L$ to be determined, consider the function
 \bess \psi(x,t)&=& \hat\eta e^{-Mt-L (x-[r(t)+l(t)]/2)^2/2} \sin\frac{\pi [x-l(t)]}{r(t)-l(t)}.\eess
 Direct calculation gives
 \bess \cL \psi(x,t)& =&\hat\eta e^{-Mt-L[x-(r+l)/2]^2/2}\Big\{ A \cos \frac{\pi (x-l)}{r-l}
     +B\ \sin \frac{\pi (x-l)}{r-l}
 \Big\}
\eess
where
\bess A &=& \frac{\pi(x-r)l'+\pi (l-x)r'}{(r-l)^2}+2a L \Big(x-\frac{r+l}{2}\Big)\frac{\pi}{r-l} +\frac{c\;\pi }{r-l},
\\ B&=& \Big\{-M + L \Big(x-\frac{r+l}2\Big)\frac{r'+l'}{2}\Big\} +
a\Big\{L -L^2\Big(x-\frac{l+r}2\Big)^2 +\frac{\pi^2}{(r-l)^2}\Big\}- c L \Big(x-\frac{r+l}2\Big)+d.\eess
First taking $L= 2 \max_{[0,T]} \{(|r'|+|l'|+\|c\|_{L^\infty})/(r-l)\}/\inf a$ and then taking a suitably  large $M$ we see that $\cL\psi\leqslant 0$.
Thus, by comparison, we have $\phi(x,t)\geqslant \psi(x,t)$ in $Q$  and obtain the assertion of the Lemma.\qed

\bigskip

\subsection{Scaling the Survival Density}
In making comparison arguments in the proof of Theorem~\ref{mainth}, we would like to have that 
constant functions are solutions of the partial differential equation under consideration. Unfortunately, 
while this is true for the operator $\cL=\partial_t-\partial^2_{xx}+\mu\partial_x$, it may fail for $\cL_1=\partial_t- \partial_{xx}^2 \sigma^2 +\partial_x\mu$. To resolve this technical difficulty, we introduce a suitable scaling of the function $u$. 

To this end, let $K=K(x,t)$ be the bounded solution of the initial value problem:
\bes\label{K} \cL_1 K:= \partial_t K -  \partial_{xx} K + \partial_x (\mu K)=0\hbox{ \ in \ }\BbbR\times(0,\infty),\quad K(\cdot,0)=1\hbox{ \ on \ }\BbbR\times\{0\}.\ees
Then $K$ is smooth,  uniformly  positive, and  bounded in $\BbbR\times[0,T]$ for any $T>0$.
Now write
\bess u(x,t)= K(x,t)\; v(x,t) .\eess
It is easy to verify that
\bess \cL_1 u =K\; \cL_2 v,\eess
where \bess \cL_2:= \partial_t- \partial_{xx} + \nu\;\partial_x,
 \qquad \nu(x,t):= \mu(x,t) - \partial_x \log K^2(x,t). \eess
In particular, note that constant functions $c$ satisfy the equation $\cL_{2} c = 0$.

\subsection{The New Hodograph Transformation}
As we have seen, the traditional hodograph transformation is defined as the inverse of $z=u(x,t)$. In order to 
work with the scaled survival density and the operator $\cL_{2}$ 
introduced above, we instead define $x=X(z,t)$ as the inverse of $z=v(x,t)$:
\bess      z = v(X(z,t),t).\eess
If $z_0=v(x_0,t_0)>0$ and $v_x(x_0,t_0)\not=0$, then by the Implicit Function Theorem,
 locally the above equation defines a unique smooth $X$
for $(z,t)$ near $(z_0,t_0)$  that satisfies  $X(z_0,t_0)=x_0$.  In addition,
 by implicit differentiation,
\bess v_x =\frac{1}{ X_z},\qquad  v_t = -\frac{X_t}{X_z},\qquad v_{xx} = -\frac{X_{zz}}{X_v^3}.\eess
Thus, $\cL_2 v=0$ implies that $X=X(z,t)$ satisfies the following quasi-linear partial differential equation of parabolic type:
\bess X_t =X_z^{-2} \; X_{zz} + \nu(X,t).\eess
The free boundary is given by $b(t)=X(0,t)$ on which we can derive the boundary condition as follows. Since $u(b(t),t)=0$  and $u_x(b(t),t)= -\dot p(t)$, we see that $v_x(b(t),t)= -\dot p(t)/K(b(t),t)$. Hence, we have the non-linear boundary condition
\bess   \dot p(t)\; X_z(0,t) + K(X(0,t),t)=0.\eess
 This is a standard boundary condition for the quasi-linear parabolic equation for $X$.

\bigskip

\subsection{The Basic Assumption}
Let $p\in P_0$ and  $(b,w,u)$ be the unique viscosity solution of the inverse problem associated with $p$. To prove Theorem \ref{mainth}, we need only establish the assertions of the theorem  in a finite time interval $[0,T]$ for any fixed positive $T$. Hence, in the sequel, we assume,  for some $T>0$ and $x_0>0$, that
\bes \label{5.1} && p\in  C^{1}([0,T]), \ \dot p<0\hbox{  on  }[0,T] ,
 \quad \label{5.2}   \  u_0\in C^{1}([0,x_0]),  \ u'_0>0 \hbox{ on }[0,x_0],\ u_0=0\hbox{  on  }(-\infty,0].\ees
These conditions are satisfied by the general assumptions made in  Theorem \ref{mainth} if (i) in (\ref{1.up}) is imposed. If instead of (i), condition (ii) in (\ref{1.up}) is imposed,
we can apply the third assertion of Lemma \ref{le2.1} to shift the initial time by  considering first the solution   $(\hat b,\hat u):=(b(s+\cdot)-b(s),u(\cdot+b(s),\cdot+s))$ for $s$ at which $w_{xt}(\cdot,s)\in L^2(\BbbR)$, and then sending $s\searrow 0$.

Under (\ref{5.1}),
we will show that $b\in C^{1/2}((0,T])$ and that $(b,u)$ is a classical solution of the free boundary problem (\ref{1u}) on $\BbbR\times[0,T]$.

\medskip

First of all, from Lemma \ref{le2.1} (1) and (2),  we see that
\bess  u\in C(\BbbR\times(0,T]\cup ((-\infty,x_0)\times\{0\})),\quad  b\in C([0,T]),\qquad b(0)=0.\eess\medskip


\subsection{ The Level Sets.}
The hodograph transformation we are going to use is based on the inverse,  $x=X(z,t)$,  of $z=v(x,t)$ where $v=u/K$.
 That is, for each $z$, $X(z,\cdot)$ is the  $z$-level set of $v$.
 For $X$ to be well-defined, we need to consider $v$ in the set where $v_x$ is positive.\medskip

 We begin by investigating the initial value $X_0=X(\cdot,0)$ specified by
 $z=u_0(X_0(z)). $  Assume that $u_0$ satisfies (\ref{5.2}).
  Then the function $z=u_0(x), x\in[0,x_0]$, admits a unique inverse,  $x=X_0(z)$, which satisfies:
\bess   X_0 \in C^1([0,u_0(x_0)]), \qquad     z = u_0( X_0(z)),\qquad   X'_{0}(z)=\frac{1}{ u'_{0}(X_0(z))},
\  \ \forall\, z\in[0,u_0(x_0)].\eess

\medskip

Next we consider $X(\cdot,t)$ for small $t$. Fix  $\vep\in(0,u_0(x_0))$. There exists
$\delta_1>0$ such that $X_0(\vep)+2\delta_1<x_0$ and  $b(s)< X_0(\vep)-2\delta_1$ for all $s\in[0,\delta_1]$.
Also, since $u\in C(\BbbR\times (0,\infty) \cup (-\infty,x_0)\times\{0\})$,
there exists $\delta_2\in(0,\delta_1]$ such that $u<\vep$ on $(-\infty,X_0(\vep)-\delta_1]\times[0,\delta_2]$ and  $u>\vep$  on $\{X_0(\vep)+\delta_1\}\times[0,\delta_2]$.  Finally, from $\cL_2 v=0$ in
$[X_0(\vep)-2\delta_1,X_0(\vep)+2\delta_1]\times(0,\delta_2]$  we see that
$v_x$ is continuous and uniformly positive and $v_t=O(t^{-1/2})$  on $[X_0(\vep)-\delta_1,X_0(\vep)+\delta_1]\times[0,\delta_3]$ for some $\delta_3\in(0,\delta_2]$. Hence for each $t\in[0,\delta_3]$,
 the equation $\vep=v(x,t)$, for $x\in [X_0(\vep)-\delta_1,X_0(\vep)+\delta_1]$, admits a unique solution which we denoted by $x=X(\vep,t)$. This solution has the property that
 $X(\vep,t)=\min\{x>b(t)\;|\; v(x,t)=\vep\}$. In addition,
$X_t(\vep,t)=-v_t/v_x=O(t^{-1/2})$. Hence,  $X(\vep,\cdot)\in C^\infty((0,\delta_3])\cap C^{1/2}([0,\delta_3])$.

\medskip

  Now we extend  the inverse,  $x=X(z,t)$, of $z=v(x,t)$ to   $(z,t)\in (0,z_0)\times[0,T]$ where  $T$ is an arbitrarily fixed positive  constant and $z_0$ is a small positive constant
 that depends on $T$ and on the viscosity solution $u$. More precisely, we prove the following.
\bigskip

\begin{lemma} Let $p\in P_0$ be given and $(b,u)$ be the unique viscosity solution of the inverse problem associated with $p$. Assume that  {\rm(\ref{5.1})}  holds.  Let $K$ be defined in {\rm(\ref{K})} and $v:=u/K.$

 Then  there exists $z_0>0$ such that the function
\bes\label{zX} X(z, t):= \min\{ x \geqslant b(t)\;|\; v(x,t) =z\} \qquad\forall\, (z,t)\in [0,z_0]\times[0,T]\ees
is well-defined and satisfies
\bess   & \displaystyle z = v(X(z,t),t),\qquad
  v_x(X(z,t),t)>0,\quad  \forall\, (z,t)\in (0,z_0]\times[0,T],
\\ & \displaystyle X \in  C^\infty((0,z_0)\times(0,T])\cap C^{1,1/2}((0,z_0]\times[0,T]).
 \eess
\end{lemma}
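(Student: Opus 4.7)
The plan is to combine the Generalized Hopf Lemma \ref{lea1} with the Implicit Function Theorem to define $X$, using a compactness argument in $t$ to make $z_0$ uniform on $[0, T]$. First observe that $v = u/K$ is smooth on $Q_b$, satisfies $\cL_2 v = 0$ there, vanishes on $x = b(t)$, is strictly positive inside $Q_b$, and satisfies $v(\cdot, 0) = u_0$ on $[0, x_0]$ since $K(\cdot, 0) \equiv 1$. Lemma \ref{le2.1}(2) gives $b \in C([0, T])$ with $b(0) = 0$.

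To produce $z_0$, I would establish uniform positivity of $v_x$ on a strip $S_\delta := \{(x, t) \mid b(t) \leq x \leq b(t) + \delta, \, t \in [0, T]\}$. For each $t^* \in (0, T]$, Lemma \ref{lea1} applied to $v$ on a parabolic rectangle whose upper-right corner is $(b(t^*), t^*)$ and whose lateral and bottom boundaries lie in $Q_b$ yields $v_x(b(t^*)+, t^*) > 0$; combined with the fact that $v \in C^{1+\alpha,(1+\alpha)/2}$ up to $x = b$ on $\{t \geq t_0\}$ for any $t_0 > 0$, this upgrades to uniform positivity on $S_\delta \cap \{t \geq t_0\}$. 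Near $t = 0$, the analysis of the particular level curve $X(\vep, \cdot)$ just carried out before the lemma, combined with the $C^1$-smoothness and positivity of $u_0'$ near $0$, extends the positivity down to $t = 0$ on a compatible strip. Patching by compactness of $[0, T]$ gives $v_x \geq \eta_0$ on $S_\delta$ for some $\delta, \eta_0 > 0$. Setting $z_0 := \inf_{t \in [0,T]} v(b(t) + \delta, t) > 0$, for each $(z, t) \in [0, z_0] \times [0, T]$ the Intermediate Value Theorem and monotonicity of $v(\cdot, t)$ on $[b(t), b(t) + \delta]$ define a unique $X(z, t)$ as in (\ref{zX}), which satisfies $v(X(z, t), t) = z$ and $v_x(X(z, t), t) \geq \eta_0 > 0$.

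Interior smoothness $X \in C^\infty((0, z_0) \times (0, T])$ then follows from the Implicit Function Theorem applied to $z = v(x, t)$. The main obstacle is the $C^{1, 1/2}$ regularity up to $t = 0$ (note that the statement excludes $z = 0$, so there is nothing to prove at the free boundary itself). By implicit differentiation, $X_z = 1/v_x$ and $X_t = -v_t/v_x$; the first gives the $C^1$-in-$z$ bound from uniform positivity of $v_x$, while for the $C^{1/2}$-in-$t$ bound the key issue is that $v_t$ is only $O(t^{-1/2})$ near $t = 0$ due to the initial-time regularity of $u$. This rate of blow-up is integrable and gives exactly $C^{1/2}$-Hölder continuity in $t$. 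Compatibility with the initial data $X_0 \in C^1([0, z_0])$ is ensured by the continuity of $v$ up to $t = 0$ on the relevant strip. The technical completion uses standard quasi-linear parabolic regularity (\citet{LSU}, Ch.\ IV) applied to $X_t = X_z^{-2} X_{zz} + \nu(X, t)$ on $[\vep, z_0] \times [0, T]$, together with the observation that the relevant bounds are uniform as $\vep \searrow 0$ on any subdomain bounded away from $z = 0$.
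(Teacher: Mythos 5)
Your proposal's central step—invoking Lemma~\ref{lea1} at a point $(b(t^*),t^*)$ on the free boundary to obtain $v_x(b(t^*)+,t^*)>0$—does not go through at this stage of the argument, and this is precisely the obstruction the paper's proof is designed to avoid. The Generalized Hopf Lemma requires the lateral boundaries $l,r$ of its domain to be Lipschitz, with $\phi>0$ in the interior. To apply it with $l(t^*)=b(t^*)$ you must find a Lipschitz (or better) curve $l(t)\geqslant b(t)$ for $t$ near $t^*$ touching the free boundary at $(b(t^*),t^*)$, and for a boundary that is only known to be continuous (Lemma~\ref{le2.1}(2) at this point gives $b\in C([0,T])$, nothing more) such a curve need not exist: any modulus-of-continuity behaviour steeper than Lipschitz on the ``$b$ increasing into the past'' side prevents it. Relatedly, the claim that $v\in C^{1+\alpha,(1+\alpha)/2}$ up to $x=b(t)$ overreaches what Lemma~\ref{le2.1}(1) gives: that result yields $u+w_{0x}\in C^{\alpha,\alpha/2}$, i.e.\ $u$ (hence $v$) is H\"older continuous up to the boundary, not $C^{1+\alpha}$, so $v_x$ is not a priori continuous up to $x=b(t)$ and ``uniform positivity on $S_\delta$'' is not accessible by this route.

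The paper sidesteps all of this by never touching $b$ within this lemma. It studies the level sets $\{v=\vep\}$ for \emph{noncritical} values $\vep$ (using Sard's theorem to ensure almost every $\vep\in(0,z_0)$ is noncritical), parametrizes the connected component through $(X_0(\vep),0)$ by arclength, and shows via the equation $v_t=v_{xx}-\nu v_x$ inside $Q_b$ that the curve rises monotonically in $t$ past $T$ before turning once and coming down. On the rising arc, one reads off $v_x(X(\vep,t),t)>0$ directly from the implicit derivative relation, and then the maximum principle for $v_x$ (which satisfies $(v_x)_t=(v_x)_{xx}+\nu(v_x)_x+\nu_x v_x$, hence has no sign-changing zeroth-order source—this is the whole point of dividing by $K$) propagates positivity to the strip between two noncritical level curves. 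Everything happens strictly inside $Q_b$, where $v$ is $C^\infty$, and the free boundary only appears via a limit at the end. I suggest you replace the Hopf-at-the-free-boundary step with this level-set/Sard argument; the pieces of your proposal concerning the interior regularity of $X$ by the Implicit Function Theorem, the $O(t^{-1/2})$ bound on $v_t$ near $t=0$, and the resulting $C^{1/2}$ regularity in $t$ are on the right track and match the paper's reasoning.
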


\begin{proof}
We believe that the idea behind this proof may have appeared previously in the literature, but are not aware of a precise reference. For completeness, 
and possible other applications, we present a full proof.
We consider the level sets of $v$ in $\BbbR\times(0,\infty)$.
We call $z \in\BbbR$  a critical value of $v$ if there exists $(x,t)\in\BbbR\times(0,\infty)$ such that $v(x,t)=z$ and either $v_t(x,t)=v_x(x,t)=0$ or $v$ is not differentiable at $(x,t)$. Hence, if $z>0$ is not a critical value, then by the Implicit Function Theorem,  the level set $\{ (x,t)\in \BbbR\times(0,\infty)\;|\;  v(x,t)=z\}$  consists of smooth curves, each of which  either does not have boundary (i.e. lies completely inside  $Q_b$)   or has boundary  on $(0,\infty)\times\{0\}$.
Since $v\in C(\BbbR\times(0,\infty))\cap C^\infty(Q_b)$ and $v(x,t)=0$ for all $x\leqslant b(t)$,  by Sard's 
Theorem (e.g. \citet{GuilleminPollack}, pages 39-45) the set of all critical values
  of $v$ has measure zero.

\medskip
As before, we denote by $x=X_0(z)$ the inverse of $z=u_0(x), x\in[0,x_0]$.  \medskip

 Since $b(0)=0$ and $b\in C([0,T])$, we can find a smooth function $\ell \in C^\infty([0,T])$  such that $\ell(0)=x_0$ and $\ell(t)>b(t)$ for all $t\in[0,T]$. We define
 \bess \Gamma:= \{(\ell(t),t)\;|\; t\in[0,T]\}, \qquad z_0:=\min_{\Gamma} v= \min_{t\in[0,T]}v(\ell(t),t).\eess
  Then $X$ in (\ref{zX}) is well-defined and $X(z,t)\in (b(t), \ell(t)]$ for all $t\in[0,T]$ and $z\in(0,z_0]$. In addition, $X(0,t)=b(t)$ for all $t\in [0,T]$ and  $X(z,0)=X_0(z)$ for all $z\in(0,z_0]$. \medskip

 Next, let $\vep\in(0,z_0)$ be a non-critical value of $v$ and let $\gamma_\vep$ be the smooth curve in $\{(x,t)\;|t>0, v(x,t)=\vep\}$ that connects  to $(X_0(\vep),0)$.
 We first claim that $\gamma_\vep$ is a simple curve, i.e., it cannot form a loop. Indeed, if it forms a loop, then the loop is in $Q_b$ so the differential equation $v_t=v_{xx} -\nu v_x$ in $Q_b$ implies that $v\equiv \vep$ inside the loop which is impossible, since $\vep$ is not a critical value. Hence, $\gamma_\vep$ is a simple curve.
We parameterize $\gamma_\vep$ by its arc-length parameter, $s$, in the form $(x,t)=(x(s),t(s)), s\in(0,l)$, with $(x(0+),t(0+))=(X_0(\vep),0)$.
It is not difficult to show that $\lim_{|x|+t\to\infty} |v(x,t)|=0$, so $\gamma_\vep$ stays in a bounded region and we must have $l<\infty$ and $t(l-)=0, x(l-)>x_0$. In addition,
by the earlier discussion of $X(\vep,t)$ for small positive $t$ we see that $t'(s)>0$ and $x(s)=X(\vep,t(s))$ for all small positive $s$.\medskip

 When $t(s)>0$, we can differentiate $v(x(s),t(s))=\vep$ to obtain
 \bes\label{xs1}& v_x(x(s),t(s))\; x'(s) + v_t(x(s),t(s))\; t'(s) =0,
 \\   \label{xs2}& v_{xx}\; x'{}^2 + 2 v_{xt} \;x'\; t' + v_{tt}\; t'{}^2 + v_{x}\; x''+v_t\; t''|^{x=x(s)}_{t=t(s)}=0.\ees

  Now we define $l_0=\sup\{ s\in(0,l)\;|\; t'>0 \hbox{ \ in \ }(0,s]\}$. Since $t(l)=0$,  we must have $l_0\in(0,l)$ and  $t'(l_0)=0$. Consequently,
 $x'(l_0)^2 =1-t'(l_0)^2=1$.
 Evaluating (\ref{xs1}) at $s=l_0$, we obtain $v_x(x(l_0),t(l_0))=0$.
 Consequently,  evaluating (\ref{xs2}) at $s=l_0$ and using $v_t= v_{xx}-\nu v_x$ we obtain
 $ v_t(x(l_0),t(l_0)) [1+t''(l_0)]=0$.  Since $\vep$ is not a critical value of $v$, we must have $v_t(x(l_0),t(l_0))\not=0$, so that $t''(l_0)=-1$.
 This implies that $t'(s)<0$ for all $s$ bigger than and close to $l_0$.\medskip

 Next we define $l_1= \sup\{ s \in(l_0,l)\;|\; t'<0 \hbox{ \ in \ } (l_0,s]\}.$
 We claim that $l_1=l$. Indeed, suppose $l_1<l$. Then we must have $t(l_1)>0$ and $t'(l_1)=0$.
As above, first evaluating (\ref{xs1}) at $s=l_1$ we obtain $v_x(x(l_1),t(l_1))=0$ and then evaluating (\ref{xs2}) at $s=l_1$ and using $v_t= v_{xx}-\nu v_x$ we get that $t''(l_1)=-1$. However, this is impossible since $t'(l_1)=0$ and $t'(s)<0$ for all $s\in(l_0,l_1).$ Thus, we must have $l_1=l$.
In summary, we have
\bess t(0+)=0,\quad t'(s)>0\ \forall\, s\in(0,l_0),\quad t'(l_0)=0,\  t''(l_0)=-1,\quad
   t'(s)<0\ \forall\,s\in (l_0,l),\quad t(l-)=0. \eess

 Now we claim that  $t(l_0)>T$. Suppose not. Then $t(s)\leqslant T$ for all $s\in[0,l]$. Since $\min_{\Gamma} v=z_0>\vep$, we see that $\gamma_\vep$ cannot touch $\Gamma$. That is, $x(s)<\ell(t(s))$ for all $s\in[0,l]$. Thus, we must have $x(l)=X_0(\vep)$.
 However, this would imply $\gamma_\vep$ forms a loop which is impossible.
Hence, $t(l_0)>T$.

 Let $\hat l \in(0, l_0)$ be the number such that
 $t(\hat l)=T$.  Denote the inverse of $t=t(s)$, $s\in[0,\hat l],$ by $s=S(t)$.
  We can apply the Maximum Principle (\cite{ProtterWeinberger}, Theorem 3.2, recalling that $\cL_2 v\leqslant 0$ on $\BbbR\times(0,\infty)$)  for $v$ on the domain $\{(y,t)\;|\; t\in[0,T],   y\in (-\infty,x(S(t))]\}$ to conclude that  $v(y,t)<\vep$ for every $y<x(S(t))$ and $t\in[0,T]$.
  Hence, we must have $x(S(t))=\min\{x>b(t)\;|\; v(x,t)=\vep\}=X(\vep,t)$. Since
  $\vep$ is not a critical value and $t'>0$ in $(0,\hat l]$, we derive from (\ref{xs1}) that  $v_x(X(\vep,t),t)>0$ for every $t\in(0,T]$. Hence, $X(\vep,\cdot)\in C^\infty((0,T])\cap C^{1/2}([0,T])$.
\medskip

Finally,
let $z_1,z_2$ be any two non-critical values of $v$ that satisfy $0<z_1<z_2\leqslant z_0$.
Then for $i=1,2$,  $X(z_i,t):=\min\{ x>b(t)\;|\; v(x,t)=z_i\}$ is a smooth function  and
$v_x(X(z_i,t),t)> 0$ for $t\in[0,T]$.  Note that $v_x$ satisfies  the
equation $(v_x)_t =  (v_x)_{xx} +\nu (v_x)_x+\nu_x v_x$ in $Q_b$, the  Maximum Principle then implies that $v_x>0$ on $\{(x,t)\;|\; t\in [0,T],\; x\in [X(z_1,t),X(z_2,t)]\}$.
By the Implicit Function Theorem, we then know that $X \in C^\infty((z_1,z_2)\times(0,T])$. Finally,
sending $z_1\to 0$ and $z_2\to z_0$ along non-critical values of $v$,
 we conclude that $X\in C^\infty((0,z_0)\times(0,T]) \cap C^{1,1/2}((0,z_0)\times[0,T])$.   This completes the proof of the Lemma.\end{proof}

\begin{remark}{\sl Here we have used the Maximum Principle for $v_x$.  The Maximum Principle may not hold for $u_x$ since the equation for $u_x$ is $(u_x)_t=(u_x)_{xx} - \mu u_{xx} - 2\mu_x u_{x} -\mu_{xx} u$ where the non-homogeneous term $\mu_{xx}u$ may cause difficulties.} 
Of course, we also need to know  that $v\in C^\infty(Q_b)\cap C(\BbbR\times(0,T]\cup((-\infty,x_0)\times\{0\}))$ and $\cL_2 v\leqslant 0$ on $\BbbR\times(0,\infty)$  so that the Maximum Principle can be applied to $v-\vep$.\end{remark}

Using the same idea as in the proof,  one can derive the following which maybe useful in qualitative
and/or quantitative studies of the free boundary.

\begin{proposition} Let $(b,u)$ be the solution of the inverse problem associated with $p\in P_0$.  Assume that $p\in C^1([0,\infty))$ and $\dot p<0$ on $[0,\infty)$. Let $v=u/K$ where $K$ is defined in {\rm(\ref{K})}.

\begin{enumerate}\item
Denote by $N(t)$ the number of roots (without counting multiplicity)  of $v_x(\cdot,t)=0$ in $(b(t),\infty)$. Then $N(t)$ is a decreasing function. In particular, if $N(0)=1$, i.e. $u_0'$ changes sign only once in $(b(0),\infty)$, then
$N(t)\equiv 1$ for all $t>0$; that is, $v_x(\cdot,t)$ changes sign only once in $(b(t),\infty)$.

 \item Suppose $u_0$ is the Delta function (i.e. $\fX_0=0$ a.s.). Then there exists $X_1\in C([0,\infty))\cap C^\infty((0,\infty))$ such that
$X_1(0)=0$ and $v_x(\cdot,t)>0$ in $(b(t),X_1(t))$ and $v_x(\cdot,t)<0$ in $(X_1(t),\infty))$ for every $t>0$.
\end{enumerate}\end{proposition}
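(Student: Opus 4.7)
The strategy for both parts is to study the zero set of $\phi:=v_x$ in $Q_b$, exploiting the fact that $\phi$ satisfies the linear parabolic equation
\begin{equation}
\phi_t = \phi_{xx}+\nu\,\phi_x+\nu_x\,\phi \quad \text{in } Q_b,
\nonumber
\end{equation}
together with two sign facts at the ends of the spatial interval $(b(t),\infty)$: the Generalized Hopf Lemma (\lemref{lea1}), applied to $v$ on a tubular neighbourhood of the graph of $b$, yields $\phi(b(t)+,t)>0$ for all $t>0$; and since $v\geqslant 0$, $v(\cdot,t)\in L^1$, with $v\to 0$ as $x\to\infty$, we must have $\phi(x,t)<0$ for all sufficiently large $x$. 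In particular $N(t)\geqslant 1$ for every $t>0$.

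For part (1) I would mimic the level-set analysis from the proof of the preceding lemma, applied to $\phi$ rather than to $v$. By Sard's theorem, the critical values of $\phi$ on $Q_b$ form a set of measure zero, so for all but countably many $\vep>0$ the level set $\{\phi=\vep\}\cap Q_b$ is a disjoint union of smooth simple curves. Parameterizing the zero curves of $\phi$ by arc length $(x(s),t(s))$ and differentiating $\phi(x(s),t(s))=0$ once and twice exactly as in \eqnref{xs1})--\eqnref{xs2}, one finds that at any turning point $s=l_0$ where $t'(l_0)=0$ one has $\phi_x(x(l_0),t(l_0))=0$; using the PDE $\phi_t=\phi_{xx}+\nu\phi_x+\nu_x\phi$ (with $\phi=0$ at that point), the same second-derivative computation then gives $t''(l_0)<0$. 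Hence zero curves can only turn \emph{downwards} in $t$, so the number of intersections $N(t)$ of the zero set with the horizontal line $\{t=\text{const}\}$ is non-increasing in $t$. Combined with the strict positivity of $\phi$ near $x=b(t)$ (which prevents zeros from entering through the free boundary) and the uniform negativity of $\phi$ for large $x$ (which, via a decay/barrier estimate on $v$ obtained from the heat-kernel representation, prevents zeros from drifting in from infinity), this yields the monotonicity of $N$. Since $N(t)\geqslant 1$, if $N(0)=1$ then $N(t)\equiv 1$.

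For part (2), with $u_0=\delta_0$, for each small $s>0$ the density $u(\cdot,s)$ is close to the heat-kernel-like density of $\fX_s$, which is strictly unimodal, and an interior analysis of $v=u/K$ on $\BbbR\times\{s\}$ gives $N(s)=1$. Shifting the initial time to $s$ and invoking part (1) yields $N(t)\equiv 1$ for all $t>0$. Writing $X_1(t)$ for the unique zero of $v_x(\cdot,t)$, uniqueness combined with the sign structure of $\phi$ forces $v_{xx}(X_1(t),t)<0$ (otherwise $\phi_x=0$ there would produce a second nearby zero by the turning-point analysis of part (1)), so the Implicit Function Theorem gives $X_1\in C^\infty((0,\infty))$. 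For continuity at $t=0$ with $X_1(0)=0$, I would use that $\BbbP(|\fX_t|>\eta)\to 0$ as $t\searrow 0$ for every $\eta>0$, together with $b(0)=0$ and the Hopf bound $v_x(b(t)+,t)>0$, to sandwich $X_1(t)$ in $(b(t),\eta)$ for every $\eta>0$ and all sufficiently small $t$.

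I expect the main obstacle to be the rigorous handling of the turning-point analysis along zero curves of $\phi$ at the free boundary: classical Sturm/Angenent-type results are stated on fixed cylinders, and here one must ensure no zeros appear or disappear through the moving boundary $x=b(t)$ (handled via the Generalized Hopf Lemma) or escape to infinity (handled via Gaussian decay of $u$ and hence of $v_x$). The other delicate point is showing that, when $u_0=\delta_0$, $v(\cdot,s)$ is genuinely unimodal for small $s>0$; this seems to require a separate short-time asymptotic analysis, comparing $u$ with the Gaussian fundamental solution of $\cL_1$.
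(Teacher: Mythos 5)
Your part~(1) strategy is essentially the ``variation of our proof'' that the paper gestures at: parametrize the level set of $\phi=v_x$ in $Q_b$ by arc length, differentiate twice along the curve, and use the parabolic equation $\phi_t=\phi_{xx}+\nu\phi_x+\nu_x\phi$ to show that at a turning point $t'(l_0)=0$ one necessarily has $t''(l_0)<0$, so level curves never turn upward in time. However, there is a real logical gap in how you deploy Sard's theorem. Sard gives you that almost every value $\vep$ of $\phi$ is non-critical, so $\{\phi=\vep\}$ is a smooth $1$-manifold for almost every $\vep$. But the quantity you need to track is $N(t)$, the number of zeros of $\phi(\cdot,t)$ at the specific level $0$, and nothing rules out $0$ being a critical value of $\phi$ at some interior point --- in fact a degenerate zero of $\phi(\cdot,t)$ (i.e.\ $\phi=\phi_x=0$ at a point) is exactly the mechanism by which $N$ drops, and one should expect those to occur. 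You switch silently from ``level set $\{\phi=\vep\}$, $\vep>0$ non-critical'' to ``parametrizing the zero curves of $\phi$'', which is precisely where the argument is not yet justified. The paper's own level-set lemma avoids this by always working at non-critical levels $\vep$ of $v$; adapting it to the zero level of $v_x$ needs either a limiting argument from non-critical levels $\pm\vep$, or a direct appeal to the Sturmian zero-number results of Nickel/Matano/Angenent, which handle degenerate zeros and establish both finiteness and monotonicity of $N(t)$ and strict decrease at multiple zeros. You allude to ``classical Sturm/Angenent-type results'' and identify the moving boundary and behaviour at infinity as the obstacles, but the Sard/level-$0$ mismatch is the prior and more basic issue. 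One further point worth flagging: the paper's ``no loops'' step for $\{v=\vep\}$ uses that constants solve $\cL_2\varphi=0$; for $\phi=v_x$ the equation has a zeroth-order term $\nu_x\phi$, so ruling out loops of $\{\phi=0\}$ requires the minimum principle rather than the constant-solution trick, and this should be said explicitly.

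For part~(2) you take a genuinely different route from the paper. The paper approximates the Delta initial datum by bell-shaped positive densities $u_0^n$ (so that $N^n(0)=1$), applies part~(1) to conclude $N^n(t)\equiv1$, and passes to the limit $n\to\infty$; the burden there is a stability/limit argument for zeros of $v^n_x$. You instead propose to show directly that $v(\cdot,s)$ is unimodal for small $s>0$ and then shift the initial time and invoke part~(1). That is a plausible alternative, but, as you yourself note, the short-time unimodality with a free boundary present is not automatic: $u(\cdot,s)$ is close to a Gaussian in an $L^1$ or uniform sense, but unimodality is not stable under such perturbations without a derivative estimate, so this genuinely requires a separate short-time analysis that you would need to supply. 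Either approach can be made to work; the paper's is arguably cleaner because bell-shaped initial data satisfy the hypothesis of part~(1) exactly, whereas yours front-loads the difficulty into a short-time asymptotic lemma.

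Small correctness note: your claim that ``$v\geqslant0$, $v\in L^1$, $v\to0$ as $x\to\infty$ forces $\phi<0$ for all large $x$'' does not follow from those facts alone; you need the Gaussian tail estimate for $u$ (hence for $u_x$ and $v_x$) to conclude that the rightmost sign of $v_x(\cdot,t)$ is negative on a whole ray, rather than merely infinitely often. This is fixable but should be stated as using the kernel representation of $u$ for large $x$, not just integrability and non-negativity.
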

The first assertion can be proven by a variation of  our proof. The second assertion follows from the fact that the Delta  function can be approximated by a sequence of the bell-shaped  positive functions, each of which has only one local maximum and no local minimum; see \cite{CCJW}.   We omit the details.

\subsection{The Initial Boundary Value Problem}
In the sequel, $X$ is defined by (\ref{zX}) and $\vep\in(0,z_0)$ is a fixed small positive constant.
We consider the quasi-linear parabolic initial boundary value problem, for the unknown function $Y=Y(z,t):$
\bes\label{2a} \left\{ \begin{array}{ll} \displaystyle Y_t  = Y_z^{-2}\; Y_{zz}+ \nu(Y,t),\qquad & z\in (0,\vep), t\in(0,T], \medskip
\\     Y(z,0) = X_0(z), & z\in[0,\vep], \medskip
\\ \dot p(t)\, Y_z(0,t)+K(Y(0,t),t) = 0, & t\in(0,T],\medskip
\\ Y_z(\vep,t) =X_z(\vep,t), & t\in(0,T].
\end{array}\right.
\ees

We know from the theory of quasi-linear equations that this problem admits a unique classical solution for $\vep$ small 
enough, which is also smooth up to the boundary, so long as $X_{z}(\vep,t)$ for $t\in[0,T]$ and $X_{0}'(z)$ for $z\in[0,\vep]$ are uniformly positive.  
The main difficulty is to show that 
 $X=Y$.\footnote{Once we have $X=Y$, and $b=Y(0,\cdot)$, it follows from classical theory that $b$ is only $1/2$ less differentiable than $p$.} 
 We know that $X$ satifies all of~(\ref{2a}), except for the third equation (i.e., we don't know a priori that $\dot p(t)\, X_z(0,t)+K(X(0,t),t) = 0, t\in(0,T]$, because we don't have smoothness of $X$ up to the boundary).

To do this, we consider for each $h\in\BbbR$, the following initial boundary value problem,  for $Y^h=Y^h(z,t)$,
\bes \left\{ \begin{array}{ll} \displaystyle Y^h_t  = (Y^h_z)^{-2}\; Y^h_{zz}+
 \nu(Y^h,t),\qquad & z\in (0,\vep), t\in(0,T],\medskip
\\     Y^h(z,0) = X_0(z)+h, & z\in[0,\vep],\medskip
\\ \dot p^h(t)\; Y^h_z(0,t)+K(Y^h(0,t),t) = 0, & t\in(0,T],\medskip
\\ Y^h_z(\vep,t) =X_z(\vep,t), & t\in(0,T],
\end{array}\right.\label{2b}
\ees
where $\{p^h\}_{h\in\BbbR}$ is a family that has the following properties:
\bess p^h\in P_0\cap C^\infty([0,T]),\quad \dot p^h<0 \hbox{ on } [0,T],
\quad (\dot p-\dot p^h)h\geqslant 0, \quad \lim_{h\to0}\|p^h-p\|_{C^1([0,T])}=0.\eess

Note that if $\mu\equiv 0$ and $p^h\equiv p$, then $\nu\equiv 0$ and $K\equiv 1$ so problem (\ref{2b}) relates to  (\ref{2a}) by the simple translation $Y^h=Y+h$.  Here we shall consider the general case. 

\subsection{Well-Posedness of the Perturbed Problems}
We now show that for each $h\in\BbbR$, (\ref{2b}) admits a unique classical solution.
Since (\ref{2a}) and (\ref{2b}) belong to   the same type  of initial-boundary value problem, we state our result in terms of (\ref{2a}). The conclusion for (\ref{2b}) is analogous.
\bigskip

\begin{lemma} Let $K,\nu$ be  smooth and bounded  functions on $\BbbR\times[0,T]$. Assume that $K>0$ and $$X_0\in C^1([0,\vep]), X_0'>0,  \quad p\in C^1([0,T]), \dot p<0,\quad
 X_z(\vep,\cdot)\in C([0,T]),  X_{z}(\vep,\cdot)>0.$$
 Then problem {\rm(\ref{2a})} admits a unique classical solution that satisfies
 \bess  & Y\in C([0,\vep]\times[0,T])\cap C^\infty((0,\vep)\times(0,T]),\quad
  Y_z\in C([0,\vep]\times(0,T]),\\
 &\displaystyle 0< \inf_{[0,\vep]\times(0,T]} Y_z \leqslant \sup_{[0,\vep]\times(0,T]} Y_z<\infty.\eess

 If in addition $p\in C^{\alpha+1/2}((t_1,t_2])$ where $\alpha\geqslant1/2$  is not an integer and $(t_1,t_2]\subset(0,T]$, then $Y\in C^{2\alpha,\alpha}([0,\vep)\times(t_1,t_2])$ so that $Y(0,\cdot)\in C^\alpha((t_1,t_2])$. \medskip

 If $p\in C^{\alpha+1/2}([0,T])$, $X_0\in C^{2\alpha}([0,\vep))$, $\alpha\geqslant 1/2$ is not an integer,  and all compatibility conditions at $(0,0)$ up to order $\llbracket\alpha+1/2\rrbracket$ are satisfied, then $Y\in C^{2\alpha,\alpha}([0,\vep)\times[0,T])$ so $Y(0,\cdot)\in C^\alpha([0,T]).$
 \end{lemma}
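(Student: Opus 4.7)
The plan is to treat (\ref{2a}) as a standard quasi-linear parabolic initial-boundary value problem with a nonlinear oblique derivative condition at $z=0$ and an inhomogeneous Neumann condition at $z=\vep$, and invoke the classical theory (\cite{LSU} Ch.~V--VI, or Lieberman Ch.~13) once uniform parabolicity has been established. The whole argument hinges on a priori bounds showing that $Y_z$ is bounded and bounded away from zero; with these in hand, the coefficient $Y_z^{-2}$ of the principal part is a well-behaved function.

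First I would check positivity of $Y_z$ on the parabolic boundary: $Y_z(z,0) = X_0'(z) > 0$ by hypothesis, $Y_z(\vep,t) = X_z(\vep,t) > 0$ from the boundary condition, and at $z=0$ the relation $Y_z(0,t) = -K(Y(0,t),t)/\dot p(t) > 0$ holds since $K>0$ is bounded and $\dot p < 0$ is bounded away from zero on $[0,T]$. Setting $W := Y_z$ and differentiating the PDE in $z$ gives a quasi-linear equation for $W$ of the schematic form $W_t = W^{-2} W_{zz} - 2 W^{-3} W_z^2 + \nu_x(Y,t) W$. After multiplying by $e^{-Ct}$ for large $C$ (to absorb the bounded zeroth-order term) and working on an approximating sequence of smooth data, the parabolic maximum principle yields $\inf W > 0$ and $\sup W < \infty$, with bounds depending only on the parabolic boundary data; taking limits preserves these bounds.

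Uniform parabolicity in hand, local-in-time existence and uniqueness of a classical solution $Y$ with the claimed regularity follow from the fixed-point formulation of quasi-linear parabolic problems with oblique derivative conditions (e.g.\ \cite{LSU} Thm.~V.7.4 or the oblique variant in Lieberman), and the a priori $W$-bounds upgrade this to a global solution on $[0,\vep]\times[0,T]$. For the higher-order assertions I would bootstrap via Schauder estimates: once $Y_z$ is H\"older continuous the coefficient $Y_z^{-2}$ is H\"older of the same class, and the boundary relation $Y_z(0,t) = -K(Y(0,t),t)/\dot p(t)$ transfers the regularity of $p$ (with a loss of $1/2$ in the time H\"older exponent) to $Y_z(0,\cdot)$. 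Applying \cite{LSU} Thm.~IV.5.3 on $(t_1,t_2)$ then gives $Y \in C^{2\alpha,\alpha}([0,\vep)\times(t_1,t_2])$ whenever $p \in C^{\alpha+1/2}((t_1,t_2])$, from which $Y(0,\cdot) \in C^\alpha((t_1,t_2])$ is immediate. For regularity up to $t=0$, the compatibility conditions at the corner $(0,0)$ up to order $\llbracket\alpha+\tfrac12\rrbracket$ are precisely what is needed so that successively differentiating the boundary condition $\dot p(t)Y_z(0,t)+K(Y(0,t),t)=0$ in $t$ (and replacing time derivatives of $Y$ by spatial ones via the PDE) matches the values obtained from the initial data $X_0+h$; this is exactly the corner-compatibility framework in \cite{LSU} Ch.~V that delivers global-in-time H\"older regularity.

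The main obstacle is the uniform lower bound on $Y_z$, because the equation for $W$ contains the term $-2W^{-3}W_z^2$ with the ``wrong'' sign, ruling out a naive maximum principle argument. The remedy is to argue on smooth approximations (for which LSU already guarantees classical solutions with $W>0$) and use a barrier of the type $\phi(z,t) = c\, e^{-Ct}\,\sin(\pi z/\vep)$, in the spirit of the Generalized Hopf Lemma (Lemma~\ref{lea1}), to prevent $W$ from collapsing to zero on the interior; once this lower bound is stable under the regularization limit, the remaining steps are routine applications of classical parabolic theory.
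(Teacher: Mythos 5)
Your overall strategy --- differentiate the equation in $z$, bound $W := Y_z$ via a maximum-principle comparison, then invoke LSU quasi-linear theory for existence and Schauder regularity --- is the same one the paper follows, and your second paragraph essentially reproduces the paper's argument: the paper writes $H_t = -(H^{-1})_{zz} + \nu_x(Y,t)\,H$ for $H = Y_z$ (the divergence form of your schematic equation) and compares with the $z$-constant barriers $\phi_\pm(t) = \mathrm{const}\cdot e^{\pm\int_0^t k(s)\,ds}$, $k(t) = \|\nu_x(\cdot,t)\|_{L^\infty}$, obtaining $m\,e^{-\int_0^t k} \leqslant Y_z \leqslant M\,e^{\int_0^t k}$ with $m,M$ read off the parabolic boundary data exactly as you set up.

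Your final paragraph, however, is both incorrect and internally inconsistent with the second. The gradient term $-2W^{-3}W_z^2$ does not obstruct the lower-bound comparison: the sub-/supersolutions $\phi_\pm$ are constant in $z$, so their own gradient contribution vanishes, and at an interior extremum of $W - \phi_\pm$ one has $W_z = 0$, so the offending term drops out of the equation evaluated there. The ``naive'' comparison of your second paragraph is correct. More seriously, the remedy you propose --- a barrier $c\,e^{-Ct}\sin(\pi z/\vep)$ in the spirit of Lemma~\ref{lea1} --- vanishes at $z=0$ and $z=\vep$, so even if it kept $W$ positive in the interior it cannot supply a \emph{uniform} lower bound on $Y_z$ up to the boundary; without that, the coefficient $Y_z^{-2}$ of the principal part is not uniformly elliptic and the LSU Schauder and corner-compatibility estimates you invoke in paragraph three are unavailable. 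The correct fix is the $z$-constant comparison you already wrote down; the last paragraph should simply be deleted.
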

\begin{proof}
According to the general theory of quasi-linear partial differential equations of parabolic type  (see \citet{LSU}), to show that (\ref{2a}) admits a unique classical solution, it suffices  to establish an a priori estimate for an upper bound and a positive lower bound for $Y_z$.
For this purpose, suppose we have a classical solution $Y$ of (\ref{2a}). Then using
a local analysis we have  $Y\in C^\infty((0,\vep)\times(0,T])$.  Set $H= Y_z$. Then we can differentiate the first two  equations in (\ref{2a}) to obtain:
\bess \left\{\begin{array}{ll} H_t = - \; (H^{-1})_{zz} + \nu_x(Y,t)\; H, \qquad &z\in (0,\vep), t\in (0,T],\medskip
 \\ H(z,0) = X_{0}'(z), & z\in[0,\vep],\medskip
\\ \displaystyle  \dot p(t)\; H(0,t) +K(Y(0,t),t)=0, & t\in(0,T],\medskip
\\  H(\vep,t) = X_z(\vep,t), & t\in(0,T].
\end{array}\right.
    \eess

Now denote
\bess M_1:= \frac{\|K\|_{L^\infty(\BbbR\times[0,T])}}{\min_{[0,T]} |\dot p|},
&& m_1:= \frac{\inf_{\BbbR\times[0,T]} K}{\max_{[0,T]} |\dot p|},
\\  M_2 := \max_{[0,x_0]} \frac{1}{u_0'(x)}, &&
m_2 := \min_{[0,x_0]} \frac{1}{u_0'(x)},
\\ M_3=\max_{t\in[0,T]} X_z(\vep,t),&&  m_3=\min_{t\in[0,T]} X_z(\vep,t),
\\  M=\max\{M_1,M_2,M_3\},&& m=\min\{m_1,m_2,m_3\},\eess
\bess
  k(t)= \|\nu_x(\cdot,t)\|_{L^\infty(\BbbR)}.\eess

Then by comparison, we have
\bess  m e^{-\int_0^t k(s)ds} \leqslant H(z,t)=Y_z(z,t) \leqslant M e^{\int_0^t k(s)ds}\quad\forall\, t\in[0,T], z\in[0,\vep].\eess

These a priori upper and lower bounds then imply that  (\ref{2a}) admits
 a unique classical solution.  The remaining assertions follow from the local and global regularity theory of parabolic equations \cite{LSU}. This completes the proof. 
 \end{proof}
 %
%
%

An an example, we demonstrate the derivation of   the first order compatibility condition:
\bess\frac{1}{- \dot p(0)} =   \lim_{t\searrow 0} Y_z(0,t)=\lim_{z\searrow 0} Y_z(z,0) = X_0'(0)= \frac{1}{u_0'(0)}. \eess

We remark that from our definition of  $X$, the compatibility of the initial and boundary data
at $(\vep,0)$ for  (\ref{2a}) is automatically satisfied. For (\ref{2b}), since $K(\cdot,0)\equiv 1$, the first order compatibility condition at $(\vep,0)$ is also satisfied, so $Y^h\in C^{1,1/2}([0,\vep]\cup[0,T]\setminus \{(0,0)\})$.

  \medskip

Finally,
to demonstrate continuous dependence, we   integrate  over $(0,\vep)\times\{t\},t\in(0,T],$ the difference of the differential equations in (\ref{2a}) and (\ref{2b}) multiplied by $Y-Y^h$ and use integration by parts to obtain
\bess && \frac12\frac{d}{dt} \int_0^\vep (Y-Y^h)^2 dz +\int_0^\vep \frac{(Y_z-Y_z^h)^2}{Y_zY_z^h} dz \\ && \quad
= \int_0^\vep (Y-Y^h)[\nu(Y,t)-\nu(Y^h,t)]dz
 +\Big[\frac{\dot p^h(t)}{K(Y^h(0,t),t)}-\frac{\dot p(t)}{K(Y(0,t),t)}\Big] [Y(0,t)-Y^h(0,t)]. \eess
 Upon using  the boundedness  of $Y_z$ and $Y_z^h$, Cauchy's inequality, and the Sobolev embedding
 \bess  \|\phi\|_{L^\infty([0,\vep])}^2 \leqslant \Big(\frac{1}{\vep}+\frac1\delta\Big)
  \int_0^\vep \phi^2(z)dz + \delta\int_0^\vep \phi^2_z(z)dz\quad\forall\,\delta>0,\eess
 and we find that there exists a positive  constant $C$ such that
  \bess \frac{d}{dt} \int_0^\vep (Y-Y^h)^2 dz  \leqslant C \int_0^\vep (Y-Y^h)^2 +C |\dot p^h(t)-\dot p(t)|^2\quad\forall\, t\in(0,T].\eess
Gronwall's inequality then yields the estimate
\bess  \max_{t\in[0,T]} \| Y(\cdot,t)-Y^h(\cdot,t)\|^2_{L^2((0,\vep))} \leqslant  C e^{CT} \Big\{  h^2
+ \int_0^T |\dot p^h-\dot p|^2 dt\Big\}.\eess
This estimate in turn implies the $C([0,T];L^2((0,\vep))$ convergence of $Y^h$ to $Y$ as $h\to 0$. By Sobolev embedding and the boundedness of $(Y-Y^h)_z$, this convergence also implies the $L^\infty$ convergence:
\bess \lim_{h\to 0} \|Y^h-Y\|_{C([0,\vep]\times[0,T])} =0.\eess
To show that  $b=Y(0,\cdot)$,  it suffices to show the following:
 \bigskip

 \begin{lemma} For every $h>0$,
 $Y^{-h}(0,\cdot)<b<Y^h(0,\cdot)$ and $Y^{-h}(\vep,\cdot)<X(\vep,\cdot)<Y^h(\vep,\cdot)$ on $[0,T]$. \end{lemma}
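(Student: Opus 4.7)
I would fix $h>0$ and argue by contradiction in the original $(x,t)$ coordinates, using the maximum principle for the operator $\cL_2$ applied to $v$ together with an inverse hodograph of $Y^h$. Define $V^h(x,t)$ implicitly by $V^h(Y^h(z,t),t)=z$, so $V^h$ is defined on the strip $\Omega^h := \{(x,t)\,|\,Y^h(0,t)<x<Y^h(\vep,t),\, 0<t\leqslant T\}$. A direct computation (the same one that produced the PDE for $X$) shows that $V^h$ satisfies $\cL_2 V^h=0$ in $\Omega^h$, with $V^h(x,0)=u_0(x-h)$, $V^h(Y^h(0,t),t)=0$, $V^h(Y^h(\vep,t),t)=\vep$. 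The key observation is that $V^h\leqslant v$ on $\Omega^h$ is equivalent (by strict monotonicity of $v$ in $x$ between the relevant level curves, which holds via the Generalized Hopf Lemma) to $Y^h(z,\cdot)\geqslant X(z,\cdot)$ at $z=0$ and $z=\vep$.

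Next I would introduce the first-exit time $t^\ast := \inf\{t\in(0,T]\,|\,Y^h(0,t)\leqslant b(t)\text{ or } Y^h(\vep,t)\leqslant X(\vep,t)\}$. Since $X_0(0)=0$ and $\dot p^h\to \dot p$, one checks $Y^h(0,0)=h>0=b(0)$ and $Y^h(\vep,0)=X_0(\vep)+h>X_0(\vep)=X(\vep,0)$, so $t^\ast>0$. For $t<t^\ast$, $\Omega^h\cap\{s<t\}$ lies inside $Q_b$ (translated by the right-monotonicity in $x$), and I would apply the maximum principle for $\cL_2$ to $V^h-v$: on the parabolic bottom, $V^h(x,0)=u_0(x-h)\leqslant u_0(x)=v(x,0)$ because $u_0\equiv 0$ on $(-\infty,0]$ and $u_0'>0$ on $[0,x_0]$; on the left lateral boundary $V^h=0<v(Y^h(0,t),t)$ since $Y^h(0,t)>b(t)$; on the right lateral boundary $V^h=\vep<v(Y^h(\vep,t),t)$ since $Y^h(\vep,t)>X(\vep,t)$ and $v_x>0$ there. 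Hence $V^h<v$ on $\Omega^h$ for $t<t^\ast$.

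Finally I would derive a contradiction at $t=t^\ast$, assuming $t^\ast\leqslant T$. By continuity, either $Y^h(0,t^\ast)=b(t^\ast)$ or $Y^h(\vep,t^\ast)=X(\vep,t^\ast)$. In either case, $V^h$ and $v$ touch along a Lipschitz curve, and the Generalized Hopf Lemma yields a strict sign on the normal derivative of $v-V^h$ at the touching point. On the right boundary, a strict sign contradicts $Y^h_z(\vep,\cdot)=X_z(\vep,\cdot)$ together with $v_x=1/X_z$ at $X(\vep,\cdot)$. On the left boundary, where $b(t^\ast)=Y^h(0,t^\ast)$, I would compute $\partial_x V^h(b(t^\ast),t^\ast)=1/Y^h_z(0,t^\ast)=-K(b(t^\ast),t^\ast)/\dot p^h(t^\ast)$ from the boundary operator $\cM^h$, and combine the strict Hopf inequality with the weak free boundary relation from Lemma~\ref{le1a} (taking $\ell$ to be an appropriate $C^1$ function tangent to the curve $Y^h(0,\cdot)$ at $t^\ast$) to get
\[
\varlimsup_{x\searrow b(t^\ast)}\frac{v(x,t^\ast)}{x-b(t^\ast)}\;\geqslant\; \frac{-\dot p(t^\ast)}{K(b(t^\ast),t^\ast)}\,.
\]
Combined with the Hopf inequality $v_x(b(t^\ast)+,t^\ast)>-K/\dot p^h(t^\ast)$, this forces $-\dot p(t^\ast)/K>-K/\dot p^h(t^\ast)$, i.e.\ $\dot p(t^\ast)<\dot p^h(t^\ast)$, which contradicts the construction $(\dot p-\dot p^h)h\geqslant 0$ for $h>0$. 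The case $h<0$ is symmetric.

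\textbf{Main obstacle.} The delicate step is the contradiction at the free boundary, because $X_z(0+,\cdot)$ is not known to exist classically; the only substitute is the one-sided weak free boundary relation in Lemma~\ref{le1a}. The perturbation family $\dot p^h$ is designed exactly so that the strict sign $(\dot p-\dot p^h)h>0$ closes the gap left by the one-sidedness, but making this rigorous requires carefully choosing the comparison function $\ell$ in Lemma~\ref{le1a} to match the geometry of the curve $t\mapsto Y^h(0,t)$ near $t^\ast$ and verifying the Hopf inequality can be invoked in the (merely Lipschitz) geometry $\Omega^h$, which is the role of our Generalized Hopf Lemma.
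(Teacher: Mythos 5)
Your overall strategy is exactly the paper's: invert the hodograph of $Y^h$ to get a comparison function $v^h$ with $\cL_2 v^h = 0$ and $v^h(Y^h(0,\cdot),\cdot)=0$, define a first-exit time $T^*$, use the maximum principle to order $v$ and $v^h$ strictly inside the comparison region, and then derive a contradiction at $T^*$ by two separate Hopf arguments — an ordinary one on the right lateral boundary against the gluing condition $Y^h_z(\vep,\cdot)=X_z(\vep,\cdot)$, and the Generalized Hopf Lemma plus Lemma~\ref{le1a} on the free boundary. That is the paper's proof.

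The quantified contradiction at the free boundary, which you correctly identify as the delicate step, is written up with several errors. First, the boundary condition $\dot p^h(t)\,Y^h_z(0,t) + K(Y^h(0,t),t) = 0$ gives $Y^h_z(0,t) = -K/\dot p^h(t)$, so $\partial_x v^h(Y^h(0,t),t) = 1/Y^h_z(0,t) = -\dot p^h(t)/K$, not $-K/\dot p^h(t)$ as you write; you equate $1/Y^h_z$ with $Y^h_z$. Second, the displayed inequality should be a $\varliminf$ taken jointly in $(x,s)$ with $s\leqslant t^*$ and $x>\ell(s)=Y^h(0,s)$, to match the form in which Lemma~\ref{le1a} is stated; your one–sided $\varlimsup$ in $x$ alone neither follows from the cited lemma nor can be played off against its first inequality (a lower bound on $\varlimsup$ does not contradict an upper bound on $\varliminf$). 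Third, with the corrected formula the contradiction is not $\,-\dot p/K > -K/\dot p^h\!\iff \dot p<\dot p^h$ (that equivalence is false); rather, the Generalized Hopf Lemma gives $\varliminf \frac{v}{x-Y^h(0,s)} \geqslant v^h_x(b(T^*),T^*) + \eta = -\dot p^h(T^*)/K + \eta$, Lemma~\ref{le1a} gives $\varliminf \frac{v}{x-Y^h(0,s)} \leqslant -\dot p(T^*)/K$, and since $\dot p^h \leqslant \dot p$ for $h>0$ one gets $-\dot p(T^*)/K + \eta \leqslant -\dot p^h(T^*)/K + \eta \leqslant -\dot p(T^*)/K$, i.e. $\eta\leqslant 0$, which is the contradiction. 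The $\eta>0$ coming out of the Generalized Hopf Lemma is what actually closes the argument; the perturbation family $p^h$ only needs to guarantee $\dot p^h\leqslant\dot p$, not supply the strictness. Once you repair the inversion, the direction of the limit, and which inequality contradicts which, your sketch matches the paper line by line.
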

    The proof will be given in the  next three  subsections.
\bigskip

 \subsection{The Inverse Hodograph Transformation}
 To show that  $Y=X$ and  $b=Y(0,\cdot)$,  we define $z=v^h(x,t)$   as the inverse function of $x=Y^h(z,t)$:
 \bess x=Y^h(z,t) , z\in [0,\vep],t\in[0,T] \qquad\Longleftrightarrow \qquad z= v^h(x,t),
 \, x\in [Y^h(0,t), Y^h(\vep,t)],\, t\in [0,T].\eess
 Since $Y^h_z>0$, the inverse is well-defined. We record the key equation for future reference: 
 \bes\label{Yhx} x= Y^h(v^h(x,t),t) \qquad\forall\, t\in[0,T],\; x\in[Y^h(0,t),Y^h(\vep,t)]. \ees

 \medskip

By  implicit differentiation,
we find that
\bess v^h_x(x,t) &=& \frac{1}{Y^h_z(z,t)}\Big|_{z=v^h(x,t)},\medskip
\\  v^h_{xx}(x,t) &=& -\frac{Y^h_{zz}(z,t)}{Y^h_z(z,t)^3}\Big|_{z=v^h(x,t)},  \medskip
\\  v^h_t(x,t) &=& -\frac{Y^h_t(z,t)}{Y^h_z(z,t)} =-\frac{Y^h_{zz}(z,t)}{Y^h_z(z,t)^{3}} - \frac{\nu(Y^h(z,t),t)}{Y^h_z( z,t)}\Big|_{z=v^h(x,t)}
\\ &=&  v_{xx}^h(x,t) -\nu(x,t) v^h_x(x,t).\eess
Finally, setting $u^h(x,t)=K(x,t) v^h(x,t)$ we see that
\bess\cL_2 v^h=0,  \quad \cL_1 u^h(x,t)=0,\quad\forall\, t\in(0,T],  x\in (Y^h(0,t),Y^h(\vep,t)).\eess

\medskip

When $t=0$, using $Y^h(z,0)=X_0(z)+h$, we see from (\ref{Yhx}) that
$ x= Y^h(v^h(x,0),0)= X_0(v^h(x,0))+h.$
This implies that $x-h= X_0(v^h(x,0))$, so using $\hat x= X_0(u_0(\hat x))$ with $\hat x=x-h$ we obtain
$v^h(x,0)=u_0(x-h)$
or
\bess  u^h(x,0)=v^h(x,0)= u_0(x-h),\quad\forall\, x\in [Y^h(0,0),Y^h(\vep,0)]=[h,X_0(\vep)+h].\eess
Next, substituting  $x=Y^h(\vep,t)$ in (\ref{Yhx}) we obtain
\bess v^h(Y^h(\vep,t))=\vep=  v(X(\vep,t),t).\eess
The boundary condition $Y_z^h(\vep,t)=X_z(\vep,t)=1/v_x(X(\vep,t),t)$ then implies that
\bess  v_x^h(Y^h(\vep,t),t) = v_x(X(\vep,t),t).\eess
Finally,  substituting $x=Y^h(0,t)$ in (\ref{Yhx}) we have
\bess v^h(Y^h(0,t),t)=0= v(b((t),t).\eess
The boundary condition $\dot p^h(t)\;Y_z^h(0,t)+K(Y^h(0,t),t)=0$ then gives
\bess  v_x^h(Y^h(0,t),t) \; K(Y^h(0,t),t)= -\dot p^h(t).\eess

In summary, we see that $ v^h=v^h(x,t)$ has the following properties:
\bess \left\{\begin{array}{ll} \cL_2 v^h=0\quad
&\forall\, t\in(0,T], x\in (Y^h(0,t),Y^h(\vep,t)), \medskip
\\    v^h(x,0)= u_0(x-h),&\forall\, x\in[Y^h(0,0),Y^h(\vep,0)],\medskip
\\ v^h(Y^h(0,t),t)=0 = v( b(t),t), & \forall\; t\in[0,T],\medskip
\\ v^h_x(Y^h(0,t),t)\;K(Y^h(0,t),t) = -\dot p^h(t),  & \forall\; t\in[0,T],\medskip
\\   v^h(Y^h(\vep,t),t)=\vep =v(X(\vep,t),t),  & \forall\; t\in[0,T],\medskip
\\  v^h_x( Y^h(\vep,t),t) = v_x(X(\vep,t),t) , & \forall\; t\in[0,T].
\end{array}
\right.
\eess
Note that $[Y^h(0,0),Y^h(\vep,0)]=[h,X_0(\vep)+h].$

\bigskip

\subsection{The Proof that $b<Y^h(0,\cdot)$ and $X(\vep,\cdot)<Y^h(\vep,\cdot)$ for $h>0$} \
\medskip

Let $h>0$ be arbitrarily fixed. We define
\bess  T^*:=\sup\{ t\in[0,T]\;|\;  b<Y^h(0,\cdot)  \hbox{ \ and \ } X(\vep,\cdot)< Y^h(\vep,\cdot)\hbox{ \ in \ }[0,t]\}.\eess
We know that $X(\vep,\cdot),Y^h(0,\cdot)$ and $Y^h(\vep,\cdot)$ are all continuous and that
$b$ is upper-semi-continuous (\cite{CCCS2}). Also,
$Y^h(0,0)=h$, $b(0+)\leqslant 0$, and
$Y^h(\vep,0)=X_0(\vep,0)+h$. Thus, we have that $T^*$ is well-defined and $T^*\in(0,T]$.

We claim that $b<Y^h(0,\cdot)$ and $X(\vep,\cdot)<Y^h(\vep,\cdot)$ on $[0,T]$. Suppose the claim is not true.   Then  we must have  have
$ b(t)<Y^h(0,t)$ and $X(\vep,t)<Y^h(\vep,t)$ for all $t\in [0,T^*)$ and
\bess \hbox{either (i) \ } X(\vep,T^*)=Y^h(\vep,T^*),\qquad\hbox{ or (ii) \ } b(T^*)=Y^h(0,T^*).\eess
We shall show that neither of the above can happen. For this  we   compare $v$ and $v^h$ in the set
\bess Q &:=& \{ (x,t)\;|\; t\in (t_0,T^*], Y^h(0,t)<x< X(\vep,t)\},
\eess where \bess  t_0&:=&\inf\{t\in [0,T^*]\;|\;
Y^h(0,t)<X(\vep,t) \hbox{ \ in \ }[t,T^*]\}.\eess
Since $b(t)<Y^h(0,t)$ for all $t\in [0,T^*)$, we have  $\cL_2 v=\cL_2 v^h=0$ in $Q$. Also, on
  the left lateral parabolic boundary of $Q$, $x=Y^{h}(0,t)$, we have $v\geqslant 0=v^h$. On the right lateral boundary of $Q$, $x=X(\vep,t)$, we have $v^h\leqslant \vep=v$. Thus $v\geqslant v^h$ on the parabolic boundary of $Q$ if $t_0>0$.
  Finally, if $t_0=0$,  we have
  $v(x,0)=u_0(x) > u_0(x-h)=v^h(x,0)$ for all $x\in [Y^h(0,0),X(\vep,0)]=[h,X_0(\vep)]$.
  Thus, $v\geqslant v^h$ on the parabolic boundary of $Q$.  Since $h>0$, we cannot have $v\equiv v^h$, so the
Strong Maximum Principle implies that $v>v^h$ in $Q$.

Now consider case (i): $X(\vep,T^*)=Y^h(\vep,T^*)$. Then
as $X(\vep,\cdot)$ is smooth and
 $v(X(\vep,T^*),T^*)=\vep =v^h(Y^h(\vep,T^*),T^*)=v^h(X(\vep,T^*),T^*)$, the Hopf Lemma implies that
$v_x(X(\vep,T^*),t)< v^h_x(X(\vep,T^*),T^*)$. This  is impossible since  $v^h_x(X(\vep,T^*),T^*)=v_x^h(Y^h(\vep,T^*),T^*)=v_x(X(\vep,T^*),T^*)$. Thus case (i) cannot happen.

Next, we consider case (ii): $b(T^*)=Y^h(0,T^*)$.  Since $Y^h(0,\cdot)$ is smooth, the generalized Hopf Lemma \ref{lea1}  implies that there exist $\eta>0$ and $\delta>0$ such that
\bess v(x,s)-v^h(x,s) \geqslant [x-Y^h(0,s)]\;\eta,\quad\forall\, s\in[T^*-\delta, T^*], x\in
[Y^h(0,s), b(T^*)+\delta].\eess
However, since $v_x^h(Y^h(0,s),s)K(Y^h(0,s),t)=-\dot p^h(s)\geqslant -\dot p(s)$, the above inequality implies
\bess \varliminf_{x>Y^h(0,s), s\leqslant T^*, (x,s)\to (b(T^*),T^*)} \frac{ u(x,s)}{x-Y^h(0,s)} \geqslant - \dot p(T^*)+\eta K(b(T^*),T^*).\eess
But this contradicts the first inequality in (\ref{1a}) [with $\ell:=Y^h(0,\cdot)$] of Lemma \ref{le1a}. Hence,
case (ii) also cannot happen.

In conclusion, when $h>0$, we  must have
$b<Y^h(0,\cdot)$  and $X(\vep,\cdot)<Y^h(\vep,\cdot)$ on $[0,T]$.
\bigskip

\subsection{The Proof that $Y^h(0,\cdot)<b$ and $Y^h(\vep,\cdot)<X(\vep,\cdot)$ for $h<0$} \
 \medskip

 Here, we use the facts that $b(0)=0$ and  $b$ is continuous on $[0,T]$, proven  in \cite{CCCS2}, and repeated here 
 as Proposition~\ref{WeakRegularityProposition}. Also, we need the fact that $\cL_2 v\leqslant 0$ in $\BbbR\times(0,\infty)$.
 \medskip

Let $h<0$ be arbitrary. We define
\bess  T^*:=\sup\{ t\in[0,T]\;|\;  Y^h(0,\cdot)< b \hbox{ \ and \ }\quad Y^h(\vep,\cdot)< X(\vep,\cdot)\hbox{ \ on \ }[0,t]\}.\eess
Since $b,X(\vep,\cdot),Y^h(0,\cdot),Y^h(\vep,\cdot)$ are all continuous and $Y^h(0,0)=h<0=b(0)$ and
$Y^h(\vep,0)=X(\vep,0)+h$, we see that  $T^*$ is well-defined and $T^*\in(0,T]$.

\medskip
We  claim that $Y^h(0,\cdot)<b$ and $Y^h(\vep,\cdot)<X(\vep,\cdot)$ on $[0,T]$.
Suppose the claim is not true. Then
$ Y(0,t)<b(t)$ and $Y^h(\vep,t)<X(\vep,t)$ for all $t\in [0,T^*)$ and
\bess \hbox{either (i) \ } X(\vep,T^*)=Y^h(\vep,T^*),\qquad\hbox{ or (ii) \ } b(T^*)=Y^h(0,T^*).\eess
To show that none of the above can happen, we  compare  $v$ and $v^h$ as before in the set
\bess Q:= \{ (x,t)\;|\; t\in (0,T^*], Y^h(0,t)<x< Y^h(\vep,t)\}.\eess
Then, by the definition of $T^*$, we have $v(x,t)\leqslant v^h(x,t)$ on the parabolic boundary of $Q$ and $\cL_2 v^h=0\geqslant \cL_2 v$ in $Q$.
The Maximum Principle then implies that $v<v^h$ in $Q$.
\medskip

Now consider case (i): $X(\vep,T^*)=Y^h(\vep,T^*)$. Then
as $Y^h(\vep,\cdot)$ is smooth and
 $v(X(\vep,T^*),T^*)=\vep=v^h(X(\vep,T^*),T^*)$, the Hopf Lemma implies that
$v_x(X(\vep,T^*),T^*)> v^h_x(X(\vep,T^*),T^*)$. This  is impossible since  $v^h_x(X(\vep,T^*),T^*)=v_x(Y^h(\vep,T^*),T^*)=v_x(X(\vep,T^*),T^*)$. Thus case (i) cannot happen.
\medskip

Next we consider case (ii): $b(T^*)=Y^h(0,T^*)$.  Again, since $Y^h(0,\cdot)$ is smooth, the generalized Hopf Lemma \ref{lea1} implies that there exist $\eta>0$ and $\delta>0$ such that
\bess v^h(x,s)-v(x,s) \geqslant [x-Y^h(0,s)]\;\eta \quad\forall\, s\in[T^*-\delta, T^*], x\in
[Y^h(0,s), b(T^*)+\delta].\eess
However, since $v_x^h(Y^h(0,s),s) K(Y^h(0,s),s)=-\dot p^h(s) \leqslant -\dot p(s)$, the above inequality implies
\bess \varlimsup_{x>Y^h(0,s), s\leqslant T^*, (x,s)\to (b(T^*),T^*)} \frac{ u(x,s)}{x-Y^h(0,s)} \leqslant - \dot p(T^*)-\eta\, K(b(T^*),t).\eess
This contradicts the second inequality in (\ref{1a}) [with $\ell=Y^h(0,\cdot)$] of Lemma \ref{le1a}. Hence,
case (ii) also cannot happen. Thus, when $h<0$, we have
$Y^h(0,\cdot)<b$  and $Y^h(\vep,\cdot)<X(\vep,\cdot)$ on $[0,T]$.

\bigskip

\subsection{Proof of Theorem \ref{mainth}}
Once  we know that
 $Y^{-h}(0,t)<b(t)<Y^h(0,t)$ for $h>0$ and $t\in[0,T]$, we can send $h\to 0$ to conclude that  $b(t)=Y(0,t)$.
 Consequently, $b=Y(0,\cdot)\in C^{1/2}((0,T])$.

 As we know $\dot p\in C([0,T])$,  one can show that $Y_z\in C([0,\vep]\times(0,T])$, so
 \bess \lim_{z\searrow 0,s\to t} Y_z(z,s)= Y_z(0,t)=-\frac{1}{K(b(t),t)\dot p(t)}\quad\forall\, t\in(0,T].\eess
 Using $X=Y$ and $u_x(Y(z,t),t)=1/[K(Y(z,t),t) Y_z(z,t))]$, we then obtain
 \bess u_{x}(b(t)+,t):=\lim_{x>b(s), (x,s)\to (b(t),t)} u_x(x,s)= -\dot p(t)\qquad \forall\, t\in(0,T].\eess
 Thus, $(b,u)$ is a classical solution of  (\ref{1u}) on $\BbbR\times[0,T]$. If, in addition, $p\in C^{\alpha+1/2}((t_1,t_2))$ for some $\alpha>1/2$ that is not an integer, then by local regularity, $b=Y(0,\cdot) \in C^{\alpha}((t_1,t_2))$. If we further have $u_0\in C^{2\alpha}([0,x_0]), p\in C^{\alpha+1/2}([0,T])$ for some $\alpha\geqslant 1/2$ that is not an integer, and all compatibility conditions up to the order $\llbracket\alpha+1/2\rrbracket$ are satisfied, then  $Y\in C^{2\alpha,\alpha}([0,\vep]\times[0,T])$ so
 $b=Y(0,\cdot)\in C^{\alpha}([0,T])$.

 Finally, upon noting that $T$ can be  arbitrarily large, we also obtain the assertion  of Theorem \ref{mainth}, which completes 
 the proof of this theorem.

\section{Conclusion}
In earlier work, we studied the inverse first-passage problem for a one-dimensional diffusion process by relating it to a variational inequality. We investigated existence and uniqueness, as well as the asymptotic behaviour of the boundary for small times, 
and weak regularity of the boundary. In this paper, we studied higher-order regularity of the free boundary in the inverse 
first-passage problem. The main tool used was the hodograph transformation. The traditional approach to the transformation begins 
with some a priori regularity assumptions, and then uses a bootstrap argument to obtain higher regularity. We presented 
the results of this approach, but then went further, studying the regularity of the free boundary under weaker assumptions. In order 
to do so, we needed to perform the hodograph transformation on a carefully chosen scaling of the survival density, and to analyze the 
behaviour of a related family of quasi-linear parabolic equations. 
We expect that the method presented here can be applied to other parabolic obstacle problems.


\bibliographystyle{plainnat}
\bibliography{References}


\end{document}